\numberwithin{equation}{section}
\theoremstyle{plain}
\newtheorem{theorem}{Theorem}[section]
\newtheorem{proposition}[theorem]{Proposition}
\newtheorem{corollary}[theorem]{Corollary}
\newtheorem{lemma}[theorem]{Lemma}
\theoremstyle{remark}
\newtheorem{remark}[theorem]{Remark}
\newtheorem{example}[theorem]{Example}
\DeclareMathOperator*{\esssup}{ess\,sup}
\newcommand{\loc}{\operatorname{loc}}
\newcommand{\R}{\mathbb{R}}
\DeclareMathOperator*{\lip}{Lip}
\begin{document}

\title[Variability and the existence of rough integrals]{Variability and the existence of rough integrals with irregular coefficients}
\author[M. Hinz]{Michael Hinz}
\address[MH]{Universit\"at Bielefeld\\
Fakult\"at f\"ur Mathematik\\
Postfach 100131\\
33501 Bielefeld\\
Germany}
\email{mhinz@math.uni-bielefeld.de}

\author[J. M. T\"olle]{Jonas M. T\"olle}
\address[JMT]{Aalto University\\
Department of Mathematics and Systems Analysis\\
PO Box 11100 (Otakaari 1, Espoo)\\
00076 Aalto\\
Finland}
\email{jonas.tolle@aalto.fi}

\author[L. Viitasaari]{Lauri Viitasaari}
\address[LH]{Aalto University School of Business\\
Department of Information and Service Management\\
PO Box 21210 (Ekonominaukio 1, Espoo)\\
00076 Aalto\\
Finland}
\email{lauri.viitasaari@aalto.fi}

\keywords{Fractional rough path integrals;
variability; irregular multiplicative functionals;
functions of bounded variation;
Gaussian processes;
fractional Brownian motion}
\subjclass{Primary: 26B30; 46E35; 60G15; 60G17; 60G22; 60L20. Secondary: 26A33; 31B15; 42B20.}

\date{\today}
\thanks{JMT gratefully acknowledges partial support by the Magnus Ehrnrooth Foundation.\\
This work is licensed under the Creative Commons Attribution 4.0 International License. To view a copy of this license, visit \url{http://creativecommons.org/licenses/by/4.0/} or send a letter to Creative Commons, PO Box 1866, Mountain View, CA 94042, USA. \includegraphics[height=1em]{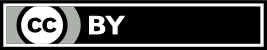}
Original work published in the Electronic Communications in Probability \textbf{30} (2025), no.~1, 1--12., \url{https://doi.org/10.1214/25-ECP656}.}
\begin{abstract}
Within the context of rough path analysis via fractional calculus, we show how variability can be used to prove the existence of integrals with respect to H\"older continuous multiplicative functionals in the case of Lipschitz coefficients with first order partial derivatives of bounded variation. We discuss applications to certain Gaussian processes, in particular, fractional Brownian motions with Hurst index $\frac13<H\leq \frac12$.
\end{abstract}
\maketitle

{\footnotesize
\tableofcontents
}
\section{Introduction}

It is well-known that under certain complementary regularity conditions on the integrand $X$ and the integrator $Y$, formulated in terms of H\"older or Sobolev regularity, or in terms of $p$-variation, the Stieltjes type integral $\int XdY$ of a path $X$ with respect to a path $Y$ exists and admits a typical estimate, see \cite{Lyons1994, Young, Zahle98, Zahle01}. These regularity conditions are sometimes referred to by saying that the pair $(X,Y)$ is in the so-called \emph{Young regime}. For a coefficient function $\varphi$ of sufficient H\"older regularity, even the pair $(\varphi(X),Y)$ is in the Young regime and, as a consequence, the integral $\int\varphi(X)\,dY$ exists, see Corollary \ref{C:compositioncase} below. By this fact, one can study differential equations driven by paths of fractional Brownian motion with Hurst index $\frac12<H<1$, see \cite{NualartRascanu, Zahle01}.

In \cite{HTV2020} and \cite{HTV2022}, we studied the situation where $\varphi$ lacks this sufficient regularity. We considered coefficient functions $\varphi$ with components $\varphi_j$ of bounded variation and proved that under a mutual diffusivity condition on the gradient measures $D\varphi_j$ and the occupation measure of $X$ the compositions $\varphi_j(X)$ are well-defined and sufficiently regular to ensure that $(\varphi(X),Y)$ is in the Young regime and $\int\varphi(X)\,dY$ exists. We call this condition the \emph{variability} of $X$ with respect to $\varphi$. See Corollary \ref{C:sobolev-membership} below.

Rough path analysis, introduced in \cite{Lyons1994, Lyons98, LyonsQian}, and, in a different formulation, in \cite{Gubinelli}, provides a vast toolkit for pathwise integrals beyond the Young regime. See \cite{LyonsQian, FH:20, FrizVictoir, LCL:07} for background. An approach to rough path analysis via fractional calculus, \cite{SKM}, was proposed and investigated in \cite{HN09}. Comprehensive further studies in this direction were provided in \cite{Ito2015, Ito2017, Ito2017b}. The results in \cite{HN09} suffice to study differential equations driven by paths of fractional Brownian motion with Hurst index $\frac13<H\leq \frac12$. The hypotheses of \cite{HN09} ensure the existence of $\int\varphi(X)\,dY$ for $\varphi$ with Lipschitz components having partial derivatives of sufficiently high H\"older regularity. We recall this in Theorem \ref{T:HN} below.

Here we show that in the setting of \cite{HN09}, a pragmatic variant of variability ensures the existence of $\int\varphi(X)\,dY$ for $\varphi$ with Lipschitz components having first order partial derivatives of bounded variation. This is proved in Theorem \ref{thm:existence-of-integral-H} below, which is a novel observation: Existing results in rough path analysis assume at least some H\"older regularity of first order partials, while in Theorem \ref{thm:existence-of-integral-H} these partials may be discontinuous. For integrals of fractional Brownian motion $B^H$ with Hurst index $\frac13<H\leq \frac12$ the result applies in two ways. In dimension one we can find an event $\Omega_1$ of full probability such that for any Lipschitz $\varphi$ with $BV$ derivative $\varphi'$ the integral $\int\varphi(B^H(\omega))dB^H(\omega)$ exists for any $\omega\in \Omega_1$. In dimension $m>1$ we can, for any finite nonnegative Borel measure $\mu$ on $\mathbb{R}^m$, find an event $\Omega_\mu$ of full probability on which the integral exists for any Lipschitz $\varphi$ with partials in $BV$ and having gradient measures dominated by $\mu$. This is weaker, but still for each $\omega\in \Omega_\mu$ the integral $\int\varphi(B^H(\omega))dB^H(\omega)$ converges in the deterministic sense. Note that approaches where also the convergence of integrals is stochastic give much stronger results, a low H\"older regularity of $\varphi$ suffices, see \cite[Proposition 3.5]{MP24} and \cite[Section 4]{MM23}. For $\frac12<H<1$ even bounded $\varphi$ is enough, \cite[Proposition 3.3]{MP24}.

In Section \ref{S:fraccalc}, we review known existence results for pathwise integrals in the Young regime and in the rough setting of \cite{HN09}. In Section \ref{S:varcomp}, we discuss our former results on compositions of paths and functions of bounded variation and on the existence of pathwise integrals in the Young regime. We prove a new result on the existence of pathwise integrals involving Lipschitz coefficients with partial derivatives of bounded variation under a weighted variability condition, Theorem \ref{thm:existence-of-integral-H}. In Section \ref{S:Gauss}, we discuss straightforward sufficient conditions 
ensuring the a.s. validity of the weighted variability condition for paths of certain Gaussian processes, including fractional Brownian motion.

\section{Pathwise integrals via fractional calculus}\label{S:fraccalc}

\subsection{Function spaces and fractional derivatives} 
Let $U\subset \mathbb{R}^n$ be open or the closure of an open set and $0<\beta<1$. Given a Borel function $f:U\to \mathbb{R}^m$ we write
\[[[f]]_{\beta}:=\sup_{x,y\in U,\ s\neq t}\frac{|f(x)-f(y)|}{|x-y|^{\beta}}\]
for its \emph{$\beta$-H\"older seminorm}. If $1\leq p<\infty$, we write 
\begin{equation}\label{E:Gagliardo}
[f]_{\beta,p}:= \left(\int_U \int_U \frac{|f(x)-f(y)|^p}{|x-y|^{n+\beta p}}\:dx\:dy\right)^{\frac{1}{p}}
\end{equation}
for its \emph{$(\beta,p)$-Gagliardo seminorm}, complemented by 
\[[f]_{\beta,\infty}:=\esssup_{x\in U}\int_U\frac{|f(x)-f(y)|}{|x-y|^{n+\beta}}\:dy\]
for the case $p=\infty$. As usual, we write $C^\beta(U;\mathbb{R}^m)$ for the space of $f$ such that 
$\|f\|_{C^\beta(U;\mathbb{R}^m)}:=\|f\|_{\sup}+[[f]]_{\beta}$ is finite and $W^{\beta,p}(U;\mathbb{R}^m)$, $1\leq p\leq \infty$, for the space
of $f$ such that $\Vert f \Vert_{W^{\beta,p}(U;\mathbb{R}^m)}: = \Vert f \Vert_{L^p(U;\mathbb{R}^m)} + [f]_{\beta,p}$ is finite. For bounded and open $U$ and $\beta'>\beta$ the space $C^{\beta'}(\overline{U};\mathbb{R}^m)$ is continuously embedded into $W^{\beta,p}(U;\mathbb{R}^m)$, where $\overline{U}$ denotes the closure of $U$.

Let $-\infty<a<b<\infty$, let $0<\alpha<1$ and let $f:[a,b]\to \mathbb{R}^m$ be a Borel function.  If $f$ is sufficiently regular, then the (left and right sided) Weyl-Marchaud fractional derivatives of $f$ of order $0<\alpha<1$, \cite{SKM}, are defined by
\[D_{a+}^\alpha f(t):=\frac{1}{\Gamma(1-\alpha)}\left(\frac{f(t)}{(t-a)^\alpha}+\alpha\int_a^t \frac{f(t)-f(s)}{(t-s)^{\alpha+1}}\,ds\right),\quad a\le t\le b,\]
and
\[D_{b-}^\alpha f(t):=\frac{(-1)^\alpha}{\Gamma(1-\alpha)}\left(\frac{f(t)}{(b-t)^\alpha}+\alpha\int_t^b \frac{f(t)-f(s)}{(s-t)^{\alpha+1}}\,ds\right),\quad a\le t\le b,\]
seen as elements of $L^1([a,b];\mathbb{R}^m)$. 

If $1\leq p\leq \infty$, $\beta>\alpha$ and $f\in W^{\beta,p}([a,b];\mathbb{R}^m)$ is continuous at $a$ respectively $b$, then $D_{a+}^\alpha (f-f(a))$ respectively $D_{b-}^\alpha (f-f(b))$ is defined and in $L^p([a,b];\mathbb{R}^m)$ with norm bounded by a universal constant times $\|f-f(a)\|_{W^{\beta,p}([a,b];\mathbb{R}^m)}$, respectively $\|f-f(b)\|_{W^{\beta,p}([a,b];\mathbb{R}^m)}$.

\subsection{Pathwise integrals via fractional calculus}

Let $-\infty<a<b<\infty$, let $X,Y:[a,b]\to \mathbb{R}^m$ be Borel functions, $X$ continuous at $a$ and $Y$ continuous at $a$ and $b$. Suppose that $1\leq u,v\leq \infty$ are such that $\frac{1}{u}+\frac{1}{v}\leq 1$ with the convention $\frac{1}{\infty}:=0$, $0<\alpha<1$, $D_{a+}^\alpha (X-X(a))\in L^u([a,b];\mathbb{R}^m)$ and $D_{b-}^\alpha (Y-Y(b))\in L^v([a,b];\mathbb{R}^m)$.
We write
\begin{equation}\label{E:basicint}
\int_a^b X\,dY:=(-1)^\alpha \int_a^b D_{a+}^\alpha (X(t)-X(a)) D_{b-}^{1-\alpha} (Y(t)-Y(b))\,dt+X(a)(Y(b)-Y(a)).
\end{equation}
This definition was introduced in \cite{Zahle98}, where it was also shown to be independent of the particular choice of $\alpha$, \cite[Proposition 2.1]{Zahle98}. To quote a variant of this well-known result, \cite{NualartRascanu, Zahle98, Zahle01}, we will say that $\int_a^b X\,dY$ in \eqref{E:basicint} \emph{exists} if for some range of $\alpha$ the integrands on the right-hand side in \eqref{E:basicint} are well-defined, the integral exists, and its value does not depend on the particular choice of $\alpha$ within that range. A proof of this particular variant can be found in \cite[Theorem 6.2]{HTV2022}.

\begin{proposition}
\label{prop:basic-GSL}
Let $0<\gamma,\delta<1$ and $1\leq p,q\leq \infty$ be such that 
\begin{equation}\label{E:gammatheta}
\gamma+\delta>1\quad\text{and}\quad \frac{1}{p}+\frac{1}{q}<\gamma+\delta.
\end{equation} 
Suppose that $X,Y:[a,b]\to \mathbb{R}^m$ are Borel functions, $X$ is continuous at $a$ and $Y$ is continuous at $a$ and $b$. If
$X \in W^{\gamma,p}([a,b];\mathbb{R}^m)$ and $Y\in W^{\delta,q}([a,b];\mathbb{R}^m)$, then $\int_a^b X\,dY$ in \eqref{E:basicint} exists and satisfies
\begin{equation}\label{E:basicest}
\Big|\int_a^b X\,dY-X(a)(Y(b)-Y(a))\Big|\leq c\:\|X-X(a)\|_{W^{\gamma,p}([a,b];\mathbb{R}^m)}\|Y-Y(b)\|_{W^{\delta,q}([a,b];\mathbb{R}^m)}
\end{equation}
where $c>0$ is a constant depending only on $a$, $b$, $\gamma$, $\delta$, $p$ and $q$.
\end{proposition}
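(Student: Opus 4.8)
The plan is to read the statement off from the definition \eqref{E:basicint}, using only three ingredients already available in this section: the quoted mapping property of the Weyl--Marchaud derivatives (if $\beta>\alpha$ and $f\in W^{\beta,r}([a,b];\R^m)$ is continuous at $a$, then $D_{a+}^\alpha(f-f(a))\in L^r([a,b];\R^m)$ with norm at most $c\,\|f-f(a)\|_{W^{\beta,r}([a,b];\R^m)}$, and symmetrically for $D_{b-}^\alpha$); a standard Sobolev-type embedding between the scales $W^{s,r}([a,b];\R^m)$, of which the excerpt already records one instance ($C^{\beta'}\hookrightarrow W^{\beta,p}$ for $\beta'>\beta$); and the independence of the right-hand side of \eqref{E:basicint} of $\alpha$, \cite[Proposition 2.1]{Zahle98}. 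In effect this is the version of the Young--Z\"ahle integration theorem established in \cite[Theorem 6.2]{HTV2022} (see also \cite{NualartRascanu,Zahle98,Zahle01}), so I only sketch the argument.

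First I would fix $\alpha$ with $1-\delta<\alpha<\gamma$; this interval is nonempty by the first part of \eqref{E:gammatheta}, and it lies in $(0,1)$ because $1-\delta\ge0$ and $\gamma<1$. Since the quoted derivative bound preserves the integrability exponent, I next lower the exponents so as to bring the sum of their reciprocals below $1$: using the second part of \eqref{E:gammatheta} I would choose $\bar p\in[p,\infty]$, $\bar q\in[q,\infty]$ with $\tfrac1{\bar p}+\tfrac1{\bar q}\le 1$, together with $0<\bar\gamma\le\gamma$ and $0<\bar\delta\le\delta$ satisfying $\bar\gamma\le\gamma-\tfrac1p+\tfrac1{\bar p}$, $\bar\delta\le\delta-\tfrac1q+\tfrac1{\bar q}$ and, moreover, $\bar\gamma>\alpha$, $\bar\delta>1-\alpha$. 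The point is that imposing $\bar\gamma>\alpha$ and $\bar\delta>1-\alpha$ forces $\tfrac1{\bar p}+\tfrac1{\bar q}>1-\gamma-\delta+\tfrac1p+\tfrac1q$, and conversely this requirement is compatible with $\tfrac1{\bar p}+\tfrac1{\bar q}\le1$ precisely when $\tfrac1p+\tfrac1q<\gamma+\delta$, while the individual admissible ranges for $\tfrac1{\bar p}$ and $\tfrac1{\bar q}$ are nonempty because $\alpha<\gamma$ and $1-\alpha<\delta$; the endpoint cases ($p$, $q$, $\bar p$ or $\bar q$ equal to $\infty$) are covered by the same arithmetic. With such a choice, the Sobolev embeddings $W^{\gamma,p}([a,b];\R^m)\hookrightarrow W^{\bar\gamma,\bar p}([a,b];\R^m)$ and $W^{\delta,q}([a,b];\R^m)\hookrightarrow W^{\bar\delta,\bar q}([a,b];\R^m)$ give $X\in W^{\bar\gamma,\bar p}([a,b];\R^m)$ and $Y\in W^{\bar\delta,\bar q}([a,b];\R^m)$ with norms controlled by $\|X\|_{W^{\gamma,p}([a,b];\R^m)}$ and $\|Y\|_{W^{\delta,q}([a,b];\R^m)}$, respectively, and $X$ stays continuous at $a$, $Y$ at $a$ and $b$.

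Now, since $\bar\gamma>\alpha$ and $\bar\delta>1-\alpha$, the quoted derivative bound shows that $D_{a+}^\alpha(X-X(a))\in L^{\bar p}([a,b];\R^m)$ and $D_{b-}^{1-\alpha}(Y-Y(b))\in L^{\bar q}([a,b];\R^m)$, with $L^{\bar p}$- and $L^{\bar q}$-norms bounded, up to constants depending only on $a,b,\gamma,\delta,p,q$ (through the fixed $\alpha,\bar p,\bar q$), by $\|X-X(a)\|_{W^{\gamma,p}([a,b];\R^m)}$ and $\|Y-Y(b)\|_{W^{\delta,q}([a,b];\R^m)}$, respectively. As $\tfrac1{\bar p}+\tfrac1{\bar q}\le1$, H\"older's inequality then shows that the integrand $t\mapsto D_{a+}^\alpha(X(t)-X(a))\,D_{b-}^{1-\alpha}(Y(t)-Y(b))$ on the right-hand side of \eqref{E:basicint} lies in $L^1([a,b])$, so the integral exists; estimating it by the product of the two norms (the prefactor $(-1)^\alpha$ has modulus one) yields \eqref{E:basicest}. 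Finally, every constraint imposed on $\alpha$ above is strict, hence remains valid for all $\alpha$ in a nondegenerate subinterval of $(1-\delta,\gamma)$, on which the right-hand side of \eqref{E:basicint} does not depend on $\alpha$ by \cite[Proposition 2.1]{Zahle98}; thus $\int_a^b X\,dY$ exists in the sense made precise before the proposition. The one genuinely delicate point is the parameter bookkeeping in the middle step — verifying that the simultaneous inequalities on $\alpha$, $\bar p$, $\bar q$ are solvable exactly under \eqref{E:gammatheta}, endpoint cases included; once the parameters are in place, the rest is a direct application of the facts recalled in this section.
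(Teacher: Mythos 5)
Your proof is correct and follows essentially the route the paper itself points to: the paper gives no proof here but defers to \cite[Theorem 6.2]{HTV2022}, whose argument is exactly this reduction — fix $\alpha\in(1-\delta,\gamma)$, lower the integrability exponents via fractional Sobolev embeddings until $\tfrac1{\bar p}+\tfrac1{\bar q}\le 1$ (possible precisely under the second condition in \eqref{E:gammatheta}), apply the $W^{\beta,r}\to L^r$ mapping property of the Weyl--Marchaud derivatives and H\"older's inequality, and invoke \cite[Proposition 2.1]{Zahle98} for independence of $\alpha$. The parameter bookkeeping you flag as the delicate point is carried out correctly.
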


\begin{remark}
\begin{enumerate}
\item[(i)] It is well-known that if $X\in C^\gamma([a,b];\mathbb{R}^m)$ and $Y\in C^\delta([a,b];\mathbb{R}^m)$ with $\gamma+\delta>1$, then $\int_a^b X\,dY$ exists and agrees with the Young, Lebesgue-Stieltjes and Riemann-Stieltjes integral of $X$ with respect to $Y$. In this case one can choose any $1-\delta<\alpha<\gamma$ in the right-hand side of \eqref{E:basicint}.  Details may be found in \cite{Gubinelli, Lyons1994, Young, Zahle98}.
\item[(ii)] If in the situation of Proposition \ref{prop:basic-GSL} we have $\gamma p<1$, then 
\[\int_a^b X\,dY=(-1)^\alpha \int_a^b D_{a+}^\alpha X(t) D_{b-}^{1-\alpha} (Y-Y(b))(t)\,dt,\]
which is true in particular for the situation in (i), see \cite[Remark on p. 340]{Zahle98}. 
\end{enumerate}
\end{remark}

\begin{corollary}\label{C:compositioncase}
Let $0<\beta,\delta<1$ and $1\leq p,q\leq \infty$. Suppose that $X:[a,b]\to \mathbb{R}^m$ and $Y:[a,b]\to \mathbb{R}^d$
are Borel functions, $Y$ is continuous at $b$. If $X \in W^{\beta,p}([a,b];\mathbb{R}^m)$, $Y\in W^{\delta,q}([a,b];\mathbb{R}^m)$, $0<\lambda<1$, $\varphi\in C^\lambda(\mathbb{R}^m;\mathbb{R}^d)$, \eqref{E:gammatheta} is satisfied with $\gamma$ 
replaced by $\lambda\beta$ and we have $\lambda\beta p<1$, then $\int_a^b\varphi(X)\,dY$ in \eqref{E:basicint} exists and satisfies 
\[\Big|\int_a^b \varphi(X)\,dY\Big|\leq c\:\Big\lbrace\|\varphi(X)\|_{L^p([a,b];\mathbb{R}^d)}+[[\varphi]]_\lambda [X]_{\beta,p}\Big\rbrace\|Y-Y(b)\|_{W^{\delta,q}([a,b];\mathbb{R}^m)},\]
where $c>0$ is a constant depending only on $a$, $b$, $\beta$, $\delta$, $p$, $q$ and $\lambda$.
\end{corollary}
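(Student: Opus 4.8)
The plan is to apply Proposition~\ref{prop:basic-GSL} to the pair $(\varphi(X),Y)$ in place of $(X,Y)$, with the parameter $\gamma$ there replaced by $\lambda\beta$ and with $p,q,\delta$ unchanged. For this pair the two conditions in \eqref{E:gammatheta} read exactly $\lambda\beta+\delta>1$ and $\tfrac1p+\tfrac1q<\lambda\beta+\delta$, which are among the hypotheses, while $Y\in W^{\delta,q}([a,b];\mathbb{R}^m)$ continuous at $b$ is given. Hence, once we know that $\varphi(X)\in W^{\lambda\beta,p}([a,b];\mathbb{R}^d)$ and that $\varphi(X)$ is continuous at $a$, Proposition~\ref{prop:basic-GSL} gives the existence of $\int_a^b\varphi(X)\,dY$; and because $\lambda\beta p<1$, part~(ii) of the remark following Proposition~\ref{prop:basic-GSL} lets us drop the boundary term $\varphi(X)(a)(Y(b)-Y(a))$ in the estimate \eqref{E:basicest}, leaving
\[
\Big|\int_a^b\varphi(X)\,dY\Big|\le c\,\|\varphi(X)\|_{W^{\lambda\beta,p}([a,b];\mathbb{R}^d)}\,\|Y-Y(b)\|_{W^{\delta,q}([a,b];\mathbb{R}^m)}.
\]

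It remains to prove the composition estimate
\[
\|\varphi(X)\|_{W^{\lambda\beta,p}([a,b];\mathbb{R}^d)}\le c\,\big(\|\varphi(X)\|_{L^p([a,b];\mathbb{R}^d)}+[[\varphi]]_\lambda\,[X]_{\beta,p}\big),
\]
whose right-hand side is precisely the bracket in the asserted bound; the continuity of $\varphi(X)$ at $a$ is immediate from that of $X$ and the continuity of $\varphi$. In the composition estimate the $L^p$-term is already one of the two summands, and it is finite since $\varphi\in C^\lambda(\mathbb{R}^m;\mathbb{R}^d)$ is in particular bounded. For the $(\lambda\beta,p)$-Gagliardo seminorm I would start from the pointwise H\"older bound $|\varphi(X(t))-\varphi(X(s))|\le[[\varphi]]_\lambda|X(t)-X(s)|^\lambda$, which gives
\[
[\varphi(X)]_{\lambda\beta,p}^{\,p}\le[[\varphi]]_\lambda^{\,p}\int_a^b\!\int_a^b\frac{|X(t)-X(s)|^{\lambda p}}{|t-s|^{1+\lambda\beta p}}\,ds\,dt,
\]
and then split the domain of integration according to whether $|X(t)-X(s)|\ge|t-s|^\beta$: on that region $|X(t)-X(s)|^{\lambda p}|t-s|^{-(1+\lambda\beta p)}\le|X(t)-X(s)|^{p}|t-s|^{-(1+\beta p)}$, so its contribution is at most $[X]_{\beta,p}^{\,p}$; on the complementary region, where $X$ behaves locally more smoothly than a $\beta$-H\"older function, the constraint $\lambda\beta p<1$ together with $X\in W^{\beta,p}$ makes the remaining integral converge, and it is absorbed into $[X]_{\beta,p}^{\,p}$ and $\|\varphi(X)\|_{L^p}^{\,p}$.

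The genuinely delicate point is this last step: the double integral above is not, by itself, comparable to $[X]_{\beta,p}^{\,p}$ (as one sees by scaling $X$), so the $L^p$-norm of $\varphi(X)$ has to enter and the exponent bookkeeping has to be carried through carefully using $\lambda\beta p<1$. This is a fractional Sobolev composition estimate for $C^\lambda$-coefficients, of the same kind that underlies the hypotheses of \cite{HN09} and the Young-regime arguments of \cite{HTV2022}, and it is where I expect the main obstacle to lie. An equivalent but more computational route avoids $W^{\lambda\beta,p}$ altogether: fix $1-\delta<\alpha<\lambda\beta$ (possible since $\lambda\beta+\delta>1$, and then $\alpha p<1$), estimate $D_{a+}^\alpha\varphi(X)$ in $L^p$ directly from its defining Weyl--Marchaud formula --- the $(t-a)^{-\alpha}$ term using boundedness of $\varphi$ and $\alpha p<1$, the double-integral term using the H\"older bound for $\varphi$ and $\alpha<\lambda\beta$ --- combine it via H\"older's inequality with $\|D_{b-}^{1-\alpha}(Y-Y(b))\|_{L^q}\le c\,\|Y-Y(b)\|_{W^{\delta,q}}$, and insert into \eqref{E:basicint}; the independence of the value from $\alpha$ is \cite[Proposition~2.1]{Zahle98}, which is already part of the notion of existence used here.
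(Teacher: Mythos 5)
Your closing ``computational route'' is essentially the paper's proof: fix $\alpha\in(1-\delta,\lambda\beta)$ (so $\alpha p<1$), bound the two summands of the Weyl--Marchaud derivative $D^\alpha_{a+}\varphi(X)$ in $L^p$, pair with $D_{b-}^{1-\alpha}(Y-Y(b))\in L^q$, and invoke \cite[Proposition 2.1]{Zahle98} for independence of $\alpha$. Two remarks on that route. First, the paper estimates the term $(t-a)^{-\alpha}\varphi(X)(t)$ via the Hardy-type inequality \cite[Lemma 4.33]{HTV2020}, which is what produces the factor $\|\varphi(X)\|_{L^p}$ in the stated bound; using only boundedness of $\varphi$ gives $\|\varphi\|_{\sup}$ instead. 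This route is also what makes the hypothesis $\lambda\beta p<1$ do its real job: $X$ is \emph{not} assumed continuous at $a$ in this corollary, so your claim that continuity of $\varphi(X)$ at $a$ ``is immediate from that of $X$'' uses a hypothesis you do not have, and Proposition \ref{prop:basic-GSL} cannot be applied verbatim to the pair $(\varphi(X),Y)$; one must work with $D^\alpha_{a+}\varphi(X)$ without subtracting the value at $a$, which is legitimate precisely because $\alpha p<1$. Second, your H\"older pairing of $L^p$ against $L^q$ only covers $\frac1p+\frac1q\le1$; the remaining range $1<\frac1p+\frac1q<\lambda\beta+\delta$ is reduced to this case as in the proof of \cite[Theorem 6.2]{HTV2022}.

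The genuine gap is in the composition estimate at the centre of your first route. Your splitting of $\int_a^b\int_a^b|X(t)-X(s)|^{\lambda p}|t-s|^{-1-\lambda\beta p}\,ds\,dt$ according to whether $|X(t)-X(s)|\ge|t-s|^\beta$ does not close: on the complementary region the only bound available from that inequality is $|X(t)-X(s)|^{\lambda p}|t-s|^{-1-\lambda\beta p}\le|t-s|^{-1}$, whose double integral diverges, and $\lambda\beta p<1$ is irrelevant to this diagonal singularity. So $\varphi(X)\in W^{\lambda\beta,p}$ is not established, and the assertion that the remainder ``is absorbed into $[X]_{\beta,p}^p$ and $\|\varphi(X)\|_{L^p}^p$'' is unsubstantiated. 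The repair is that the borderline order $\lambda\beta$ is never needed: any $\alpha'$ with $\alpha<\alpha'<\lambda\beta$ suffices to control $\int_a^t|\varphi(X)(t)-\varphi(X)(s)|(t-s)^{-1-\alpha}\,ds$ in $L^p$, and for $\alpha'<\lambda\beta$ a single application of H\"older's inequality with exponents $\frac1\lambda$ and $\frac{1}{1-\lambda}$ gives
\[
\int_a^b\!\!\int_a^b\frac{|X(t)-X(s)|^{\lambda p}}{|t-s|^{1+\alpha' p}}\,ds\,dt
\le[X]_{\beta,p}^{\lambda p}\left(\int_a^b\!\!\int_a^b|t-s|^{-1+\frac{(\lambda\beta-\alpha')p}{1-\lambda}}\,ds\,dt\right)^{1-\lambda}<\infty,
\]
hence $[\varphi(X)]_{\alpha',p}\le c\,[[\varphi]]_\lambda[X]_{\beta,p}^{\lambda}$, from which the stated factor follows. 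With this replacement, and with the $\alpha p<1$ treatment of the boundary term in place of continuity at $a$, your argument coincides with the paper's.
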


\begin{proof}
Assume first that $\frac{1}{p}+\frac{1}{q}\leq 1$ and choose $\alpha\in (1-\delta,\lambda\beta)$.
The $L^p$-norm of the first summand in $D_{a+}^\alpha \varphi(X)$ is bounded by a constant times $\|\varphi(X)\|_{W^{\alpha,p}([a,b];\mathbb{R}^d)}$, see \cite[Lemma 4.33]{HTV2020}. This norm, and also the $L^p$-norm of the second summand, does not exceed a constant times $\|\varphi(X)\|_{W^{\lambda\beta,p}([a,b];\mathbb{R}^d)}$. Using the H\"older property of $\varphi$ on the Gagliardo seminorm part we find the first factor in the stated estimate. The case $1< \frac{1}{p}+\frac{1}{q}<\lambda\beta+\delta$ can be reduced to the first case, see the proof of \cite[Theorem 6.2]{HTV2022}.
\end{proof}

\subsection{Rough path analysis via fractional calculus} Approaches to pathwise integrals via fractional calculus beyond the Young regime were studied in \cite{HN09, BesaluNualart2011, Ito2015, Ito2017, Ito2017b}. We follow \cite[Section 3]{HN09}.

Fix $\frac{1}{3}<\beta<\frac{1}{2}$. Given a function $X:[a,b]\to \mathbb{R}^m$ we use the notation 
\[X_{s,t}=X(t)-X(s),\quad a\leq s<t\leq b.\]
Set $\Delta:=\{(s,t)\subset [a,b]\times [a,b]\;\colon\;a\le s\le t\le b\}$. A triple $(X,Y,X\otimes Y)$ is called an \emph{$(m,d)$-dimensional $\beta$-H\"older continuous multiplicative functional} on $[a,b]$ if $X=(X^1,...,X^m):[a,b]\to\R^m$ and $Y=(Y^1,...,Y^d):[a,b]\to\R^d$ are $\beta$-H\"older continuous functions and $X\otimes Y:\Delta\to\R^m\otimes\R^d$ is a continuous function such that 
\begin{equation}\label{E:chen}
(X\otimes Y)_{s,u}+(X\otimes Y)_{u,t}+X_{s,u}\otimes Y_{u,t}=(X\otimes Y)_{s,t},\quad a\leq s\le u\le t\leq b,
\end{equation}
and 
\begin{equation}\label{E:twoparam}
\left|(X\otimes Y)_{s,t}\right|\leq c\: |t-s|^{2\beta},\quad (s,t)\in\Delta
\end{equation}
with a constant $c>0$ depending only on $a$, $b$, $\beta$, $X$ and $Y$. See for instance \cite{Lyons98} or \cite[Definition 3.1]{HN09}. The space of all $(m,d)$-dimensional $\beta$-H\"older continuous multiplicative functionals on $[a,b]$ is denoted by $M_{m,d}^\beta([a,b])$.

\begin{remark}
\begin{enumerate}
\item[(i)] We will only use the $\beta$-H\"older continuity of $X$ and $Y$ and condition \eqref{E:twoparam}, Chen's relation \eqref{E:chen} will not be used.
\item[(ii)] Here the symbol $X\otimes Y$ is only a shortcut notation for an abstract two-parameter function. Given $X$ and $Y$, the validity of \eqref{E:chen} does not specify a unique tensor product of $X$ and $Y$. At the level of $X$ or $Y$, formula \eqref{E:chen} is invariant under perturbation by constant paths, whereas at the level of $X\otimes Y$ it is invariant under perturbation by increments of paths, see \cite{FH:20,LCL:07,Gubinelli,LyonsQian}. We refer to \cite{CG:14} for a general discussion of the second order object $X\otimes Y$ in the context of rough sheets.
\end{enumerate}
\end{remark}

\begin{example}\label{Ex:MF}
\begin{enumerate}
\item[(i)] It is well-known that if $X$ and $Y$ are continuously differentiable, then
$(X\otimes Y)_{s,t}^{i,j}=\int_{s<\xi<\eta<t} dX^i(\xi)\: dY^j(\eta)$, $i=1,\ldots,m$, $j=1,\ldots,d$,
gives a multiplicative functional $(X,Y,X\otimes Y)\in M_{m,d}^\beta([a,b])$, see \cite[p. 2693]{HN09} or \cite[Examples 7.1]{FrizVictoir}.
\item[(ii)] If $X=Y=B$ is a $d$-dimensional Brownian motion and $\frac13<\beta<\frac12$, then one can choose a version of the Stratonovich integral $(B\otimes B)_{s,t}=\int_s^t B_{r,s}d^\circ B_r$ so that $\mathbb{P}$-a.a. realizations of $(B,B,B\otimes B)$ are in $M_{d,d}^\beta([a,b])$, see \cite[Section 5]{HN09}.
\item[(iii)] If $X=Y=B^H$ is a $d$-dimensional fractional Brownian motion with Hurst index $\frac13<H<\frac12$, then one can use dyadic approximations or convolutions to define $B^H\otimes B^H$ as an $\mathbb{P}$-a.s. limit, see \cite[Theorem 2]{CQ2002} or \cite[Theorem 15.35]{FrizVictoir}. A different definition for $B^H\otimes B^H$ using kernel representations was given in \cite[Section 3.1]{NT:11}. For this definition \cite[Propositions 3.2 and 3.3]{NT:11} ensure that for $\frac13<\beta<H$ $\mathbb{P}$-a.a. realizations of $(B^H,B^H,B^H\otimes B^H)$ are in $M_{d,d}^\beta([a,b])$. A comparison of these definitions is provided in \cite[Section 5.1]{NT:11}.
\end{enumerate}
\end{example}

For $(X,Y,X\otimes Y)\in M_{m,d}^\beta([a,b])$ and $0<\gamma<1\wedge 2\beta$ we define the function
\[D_{b-}^{\gamma}(X\otimes Y)(r):=\frac{(-1)^{\gamma}}{\Gamma(1-\gamma)}\left(\frac{(X\otimes Y)_{r,b}}{(b-r)^{\gamma}}+\gamma\int_r^b \frac{(X\otimes Y)_{r,s}}{(s-r)^{\gamma+1}}\,ds\right),\quad a\le r<b.\]

Given a $\beta$-H\"older continuous function $X=(X^1,...,X^m):[a,b]\to\R^m$, a differentiable function $\varphi=(\varphi_1,...,\varphi_d):\mathbb{R}^m\to\mathbb{R}^d$ with $\lambda$-H\"older continuous first order partial derivatives $\partial_i \varphi=(\partial_i\varphi_1,...,\partial_i\varphi_d)$, $i=1,...,m$, and $0<\gamma<1\wedge \beta(1+\lambda)$,
the \emph{compensated fractional derivative} $\hat{D}^\gamma_{a+} \varphi(X)$ of $\varphi(X)$ is defined by
\begin{multline}
\hat{D}^\gamma_{a+} \varphi(X)(r)=\frac{1}{\Gamma(1-\alpha)}\Bigg(\frac{\varphi(X)(r)}{(r-a)^\gamma}\notag\\
+\gamma\int_a^r\frac{(\varphi(X))_{\theta,r}-\sum_{i=1}^m \partial_i \varphi(X)(\theta) X^i_{\theta,r}}{(r-\theta)^{\gamma+1}}\,d\theta\Bigg),\quad a\leq r\leq b.\notag
\end{multline}

We recall \cite[Definition 3.2]{HN09}: Given $(X,Y,X\otimes Y)\in M_{m,d}^\beta([a,b])$, a function $\varphi:\mathbb{R}^m\to\mathbb{R}^d$ and $0<\alpha<1$, we write 
\begin{align}
\int_a^b \varphi(X)\,dY:=&(-1)^\alpha \sum_{j=1}^d\int_a^b \hat{D}^\alpha_{a+}\varphi_j(X)(r) D_{b-}^{1-\alpha} (Y-Y(b))^j(r)\,dr\notag\\
&-(-1)^{2\alpha-1}\sum_{i=1}^m\sum_{j=1}^d \int_a^b D_{a+}^{2\alpha-1}(\partial_i \varphi_j(X))(r) D_{b-}^{2-2\alpha}(X\otimes Y)^{i,j}_{r,b}\,dr\label{eq:integral-def}
\end{align}
and agree to say that $\int_a^b \varphi(X)\,dY$ in \eqref{eq:integral-def} \emph{exists} if for a certain range of $\alpha$ the integrands on the right-hand side in \eqref{eq:integral-def} are well-defined, the integrals exist and their sum does not depend on the particular choice of $\alpha$ within that range.

Up to details, the following was shown in \cite[proof of Theorem 3.3]{HN09}. 
\begin{theorem}\label{T:HN}
Suppose that $\frac{1}{3}<\beta<\frac{1}{2}$, $(X,Y,X\otimes Y)\in M_{m,d}^\beta([a,b])$ and $\varphi$ is differentiable with $\lambda$-H\"older continuous first order partial derivatives. If
\begin{equation}\label{E:lambdacond}
\frac{1}{\beta}-2<\lambda < 1,\
\end{equation}
then $\int_a^b \varphi(X)\,dY$ in \eqref{eq:integral-def} exists and we have 
\begin{multline}
\Big|\int_a^b\varphi(X)\,dY\Big|\leq c\:\sum_{j=1}^d\Big\lbrace \|\varphi_j(X)\|_{L^1([a,b])}+\lip(\varphi_j)[[X]]_\beta+\sum_{i=1}^m[[\partial_i\varphi_j]]_\lambda[[X]]_\beta^{1+\lambda}\Big\rbrace [[Y^j]]_\beta\notag\\
+c\:\sum_{j=1}^d\sum_{i=1}^m\Big\lbrace \|\partial_i\varphi_j(X)\|_{L^1([a,b])}+[[\partial_i\varphi_j]]_\lambda[[X]]_\beta\Big\rbrace[[(X\otimes Y)_{\cdot,b}^{i,j}]]_{2\beta}.\notag
\end{multline}
\end{theorem}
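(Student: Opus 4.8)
The plan is to follow the fractional-calculus argument of \cite[proof of Theorem 3.3]{HN09}. The idea is to fix a single exponent $\alpha$ for which all four fractional derivatives appearing in \eqref{eq:integral-def} are simultaneously well-defined, to bound each of the four factors pointwise in $r$ by an explicit power of $(r-a)$ times an explicit power of $(b-r)$ (possibly together with an undifferentiated factor $\varphi_j(X)(r)$ or $\partial_i\varphi_j(X)(r)$), to integrate the resulting products over $[a,b]$ by Fubini and elementary Beta-function computations, and finally to argue that the value of \eqref{eq:integral-def} does not depend on the choice of $\alpha$ within the admissible range, exactly as in \cite[Proposition 2.1]{Zahle98}.

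First I would determine the admissible range of $\alpha$. Four conditions must hold: (i) $D_{b-}^{1-\alpha}(Y-Y(b))^j$ is controlled via the $\beta$-H\"older continuity of $Y$, which forces $1-\alpha<\beta$, i.e.\ $\alpha>1-\beta$; (ii) the compensated derivative $\hat D^\alpha_{a+}\varphi_j(X)$ must be defined, i.e.\ the compensated inner integral must converge, which as explained below requires $\alpha<(1+\lambda)\beta$; (iii) $D_{a+}^{2\alpha-1}(\partial_i\varphi_j(X))$ must be defined, and since $\partial_i\varphi_j\circ X$ is $\lambda\beta$-H\"older (being the composition of a $\lambda$-H\"older function with a $\beta$-H\"older one, with seminorm at most $[[\partial_i\varphi_j]]_\lambda[[X]]_\beta^\lambda$) this needs $0<2\alpha-1<\lambda\beta$, i.e.\ $\tfrac12<\alpha<\tfrac{1+\lambda\beta}{2}$; (iv) $D_{b-}^{2-2\alpha}(X\otimes Y)^{i,j}_{\cdot,b}$ must be defined, which via \eqref{E:twoparam} needs $0<2-2\alpha<2\beta\wedge 1$, i.e.\ $1-\beta<\alpha<1$. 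Because $\beta<\tfrac12$ we have $1-\beta>\tfrac12$, so the binding lower bound is $\alpha>1-\beta$, while the binding upper bounds are $(1+\lambda)\beta$ and $\tfrac{1+\lambda\beta}{2}$. A direct computation shows that $1-\beta<(1+\lambda)\beta$ and $1-\beta<\tfrac{1+\lambda\beta}{2}$ are each equivalent to $\lambda\beta>1-2\beta$, that is, to $\lambda>\tfrac1\beta-2$. Hence the admissible interval for $\alpha$ is non-empty precisely under \eqref{E:lambdacond}; I would fix any such $\alpha$.

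Next come the pointwise estimates. For the two right-sided derivatives I would use their Marchaud forms together with $|Y_{r,s}|\le[[Y]]_\beta|s-r|^\beta$, respectively $|(X\otimes Y)_{r,s}|\le c\,|s-r|^{2\beta}$ from \eqref{E:twoparam}, obtaining $|D_{b-}^{1-\alpha}(Y-Y(b))^j(r)|\le c\,[[Y^j]]_\beta\,(b-r)^{\beta-1+\alpha}$ and $|D_{b-}^{2-2\alpha}(X\otimes Y)^{i,j}_{r,b}|\le c\,[[(X\otimes Y)_{\cdot,b}^{i,j}]]_{2\beta}\,(b-r)^{2\beta-2+2\alpha}$, the convergence of the defining integrals being where $\alpha>1-\beta$ enters. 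For $D_{a+}^{2\alpha-1}(\partial_i\varphi_j(X))$ I would use the $\lambda\beta$-H\"older bound on $\partial_i\varphi_j\circ X$ to get a bound of the form $c\big(|\partial_i\varphi_j(X)(r)|(r-a)^{1-2\alpha}+[[\partial_i\varphi_j]]_\lambda[[X]]_\beta^{\lambda}(r-a)^{\lambda\beta-2\alpha+1}\big)$, the second summand being integrable because $2\alpha-1<\lambda\beta$. The only genuinely new ingredient is the estimate of the compensated derivative: by the fundamental theorem of calculus along the segment joining $X(\theta)$ to $X(r)$,
\[
(\varphi_j(X))_{\theta,r}-\sum_{i=1}^m\partial_i\varphi_j(X)(\theta)\,X^i_{\theta,r}
=\int_0^1\sum_{i=1}^m\big(\partial_i\varphi_j(X(\theta)+\tau X_{\theta,r})-\partial_i\varphi_j(X(\theta))\big)X^i_{\theta,r}\,d\tau,
\]
whose modulus is at most $c\sum_i[[\partial_i\varphi_j]]_\lambda|X_{\theta,r}|^{1+\lambda}\le c\sum_i[[\partial_i\varphi_j]]_\lambda[[X]]_\beta^{1+\lambda}|r-\theta|^{(1+\lambda)\beta}$; this is exactly what produces the exponent $(1+\lambda)\beta$, hence the condition $\alpha<(1+\lambda)\beta$, in $\hat D^\alpha_{a+}\varphi_j(X)(r)$ (whose first summand $\varphi_j(X)(r)(r-a)^{-\alpha}$ is also present). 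Feeding the four pointwise bounds into \eqref{eq:integral-def}, each integrand is bounded by an undifferentiated factor times a product of powers of $(r-a)$ and $(b-r)$ with exponents all strictly greater than $-1$ for the chosen $\alpha$, so Fubini together with $\int_a^b(r-a)^{s}(b-r)^{t}\,dr=B(s+1,t+1)(b-a)^{s+t+1}$ ($s,t>-1$) yields the asserted estimate, with constant depending only on $a,b,\beta,\lambda$ and the dimensions. The $\alpha$-independence of the value is obtained, as in \cite{HN09} and \cite[Proposition 2.1]{Zahle98}, by fractional integration by parts or by approximating $\varphi$ and $X,Y$ by smooth objects for which \eqref{eq:integral-def} reduces to a Riemann--Stieltjes-type expression that is manifestly independent of $\alpha$.

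I expect no deep obstacle, since \cite{HN09} already carries out essentially this computation; the points requiring care are the simultaneous verification of the four constraints on $\alpha$ (the computation identifying their joint compatibility with \eqref{E:lambdacond}), the Taylor-type bound on the compensated numerator displayed above, and the handling of the mildly singular boundary factors $(r-a)^{-\alpha}$, $(r-a)^{1-2\alpha}$, $(b-r)^{\beta-1+\alpha}$ and $(b-r)^{2\beta-2+2\alpha}$, all of which are integrable for admissible $\alpha$ and are controlled using the continuity of $X$ at $a$ (hence of $\varphi_j(X)$ and $\partial_i\varphi_j(X)$ there) and the compactness of $X([a,b])$.
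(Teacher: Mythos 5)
Your proposal is correct and follows essentially the same route as the paper's proof: the same admissible window $1-\beta<\alpha<\tfrac{1+\lambda\beta}{2}$ (nonempty exactly under \eqref{E:lambdacond}), the same Taylor-type bound along the segment $\gamma^{\theta,r}$ producing the exponent $(1+\lambda)\beta$ in the compensated derivative, the same H\"older-composition bound for $\partial_i\varphi_j(X)$, and the same Marchaud estimates for the right-sided derivatives of $Y$ and $X\otimes Y$, with $\alpha$-independence deferred to \cite[Proposition 2.1]{Zahle98}. The only difference is organizational: you carry explicit powers of $(r-a)$ and $(b-r)$ through a Beta-function integration, whereas the paper pairs $L^1$ bounds on the left-sided derivatives with $\sup$ bounds on the right-sided ones, which amounts to the same computation.
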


We state a proof for the convenience of the reader and since parts of this proof will later be recycled to prove our main result Theorem \ref{thm:existence-of-integral-H} below.
\begin{proof}
By \eqref{E:lambdacond} we can find 
\begin{equation}\label{E:alphacond}
1-\beta<\alpha<\frac{\lambda\beta+1}{2}.
\end{equation}
For any $r,\theta\in [a,b]$, let $\gamma^{\theta,r}:[0,1]\to \mathbb{R}^m$ denote the constant speed parametrization 
\begin{equation}\label{E:paramline}
\gamma^{\theta,r}(t) = t X(r) + (1-t)X(\theta)
\end{equation}
of the straight line from $X_\theta$ to $X_r$. For fixed $j$ we have
\begin{equation}\label{E:stokes}
\varphi_j(X)_{\theta,r}=\int_{\gamma^{\theta,r}} d\varphi_j=\int_0^1 D \varphi_j(\gamma^{\theta,r}(t))(\dot{\gamma}^{\theta,r})\:dt=\sum_{i=1}^m \int_0^1 \partial_i \varphi_j(\gamma^{\theta,r}(t))\:dt\:X_{\theta,r}^i
\end{equation}
and consequently
\begin{align}\label{eq:Mbound}
\Big|\varphi_j(X)_{\theta,r}-\sum_{i=1}^m \partial_i\varphi_j(X(\theta))X_{\theta,r}^i\Big|&=\left|\sum_{i=1}^m\int_0^1(\partial_i \varphi_j(\gamma^{\theta,r}(t))-\partial_i \varphi_j(X(\theta)))\:dt\:X_{\theta,r}^i\right|\\
&\leq\:|X_{\theta,r}|\:\sum_{i=1}^m\int_0^1 |\partial_i \varphi_j(\gamma^{\theta,r}(t))-\partial_i \varphi_j(X(\theta))|\:dt\notag\\
&\leq\:\frac{1}{1+\lambda}|X_{\theta,r}|^{1+\lambda}\sum_{i=1}^m[[\partial_i\varphi_j]]_\lambda,\notag
\end{align}
since $|\gamma^{\theta,r}(t)-X(\theta)|\leq t|X(r)-X(\theta)|$ for any $t\in [0,1]$. Using this on the second summand of $\hat{D}^\alpha_{a+} \varphi(X)$ and again \cite[Lemma 4.33]{HTV2020} on the first, we find that 
\begin{align}
\int_a^b \vert  \hat{D}^\alpha_{a+}\varphi_j(X)(r)\vert\: dr&\leq c\:\Big\lbrace \|\varphi_j(X)\|_{W^{\alpha,1}([a,b])}+\sum_{i=1}^m[[\partial_i\varphi_j]]_\lambda[[X]]_\beta^{1+\lambda}\Big\rbrace\notag\\
&\leq c\:\Big\lbrace\|\varphi_j(X)\|_{L^1([a,b])}+\lip(\varphi_j)[[X]]_\beta+\sum_{i=1}^m[[\partial_i\varphi_j]]_\lambda [[X]]_\beta^{1+\lambda}\Big\rbrace.\notag
\end{align}
Here we have used that $\alpha<\beta$ by \eqref{E:alphacond} and since $\beta<\frac12$ and $\lambda<1$, and we have used that $\frac{\lambda \beta-1}{2}+\beta>0$ by \eqref{E:lambdacond} and by \eqref{E:alphacond} therefore $\alpha<\frac{\lambda\beta+1}{2}<(1+\lambda)\beta$.
Similarly as in the proof of Corollary \ref{C:compositioncase} we find that 
\[\int_a^b|D_{a+}^{2\alpha-1}(\partial_i \varphi_j(X))(r)|\:dr\leq c\:\Big\lbrace \|\partial_i\varphi_j(X)\|_{L^1([a,b])}+[[\partial_i\varphi_j]]_\lambda[[X]]_\beta\Big\rbrace,\]
here we use the fact that $2\alpha<\lambda\beta+1$. Since $\beta>1-\alpha$, we have 
\[\|D_{a+}^{1-\alpha}(Y-Y(b))^j\|_{\sup}\leq c\:[[Y^j]]_\beta\quad \text{and}\quad \|D_{b-}^{2-2\alpha}(X\otimes Y)^{i,j}_{\cdot,b}\|_{\sup}\leq c\:[[(X\otimes Y)_{\cdot,b}]]_{2\beta}.\]
This shows the existence of the integral and the stated estimate. The independence of the integral of the particular choice of $\alpha$ is seen similarly as in \cite[Proposition 2.1]{Zahle98}.
\end{proof}

\section{Variability and composition with $BV$-functions}\label{S:varcomp}

\subsection{Existence of integrals in the elementary case} Recall that $\varphi\in L^1_{\loc}(\mathbb{R}^m)$ is of \emph{locally bounded variation} (denoted by $\varphi\in BV_{\loc}(\mathbb{R}^m)$) if its distributional partial derivatives $D_i\varphi$,  $i=1,...,m$, are signed Radon measures. We write $D\varphi=(D_1\varphi,...,D_m\varphi)$ for its $\mathbb{R}^m$-valued gradient measure and $\left\|D\varphi\right\|$ for the total variation measure of $D\varphi$. The measure $\left\|D\varphi\right\|$ is a nonnegative Radon measure. If $\varphi\in L^1(\mathbb{R}^m)$ and $\left\|D\varphi\right\|(\mathbb{R}^m)<\infty$, then $\varphi$ is said to be of \emph{bounded variation}, $\varphi\in BV(\mathbb{R}^m)$.
Details and background can for instance be found in \cite{AFP} or \cite{Ziemer}.

Given $0<\gamma<m$ and a nonnegative Radon measure $\nu$ on $\mathbb{R}^m$, we write 
\begin{equation}\label{E:Rieszpot}
U^\gamma\nu(x)=\int_{\mathbb{R}^m} |x-z|^{\gamma-m}\,\nu(dz),\quad x\in \mathbb{R}^m.
\end{equation}
Up to a constant which depends only on $m$ and $\gamma$, the $[0,\infty]$-valued function $U^\gamma\nu$ is the \emph{Riesz potential of order $\gamma$ of $\nu$}, see \cite{AH96} or \cite{Landkof}.

Given $\varphi \in BV_{\loc}(\mathbb{R}^m)$, $1\leq p<\infty$ and $0<s<1$, we say that a Borel function $X:[a,b]\to\mathbb{R}^m$ is \emph{$(s,p)$-variable w.r.t. $\varphi$} if the function $t\mapsto U^{1-s}\|D\varphi\|(X(t))$ is in $L^p([a,b])$. We write $V(\varphi,s,p)$ for the set of all $X$ integrable over $[a,b]$ and $(s,p)$-variable w.r.t. $\varphi$. See \cite[Definition 2.2]{HTV2020} or \cite[Definition 5.1]{HTV2022}. The following composition results were shown in \cite[Theorem 2.12 (iii)]{HTV2020} and \cite[Theorem 5.11 (i)]{HTV2022}.

\begin{theorem}
\label{thm:sobolev-membership}
Let $X:[a,b]\to\mathbb{R}^m$ be a Borel function, $0<s,\beta<1$ and $1\leq p<\infty$. 
\begin{enumerate}
\item[(i)] If $\varphi\in BV_{\loc}(\mathbb{R}^m)$, $\gamma<s\beta$ and $X \in C^{\beta}([a,b];\mathbb{R}^m)\cap V(\varphi,s,p)$, 
then $\varphi \circ X \in W^{\gamma,p}([a,b])$ and in particular, 
\[[\varphi\circ X]_{\gamma,p}\leq c\: [[X]]_{\beta}^s\: \big\| U^{1-s}\|D\varphi\|(X)\big\|_{L^p([a,b])}\]
with a constant $c>0$ depending only on $a$, $b$, $m$, $s$, $p$, $\beta$ and $\gamma$.
\item[(ii)] If $\varphi\in BV(\mathbb{R}^m)$, $1\leq q <\infty$, $r\geq 1$ is such that $\frac{1}{p}+\frac{s}{q}\leq \frac{1}{r}$, $\gamma<s\beta$ and $X \in W^{\beta,q}([a,b];\mathbb{R}^m)\cap V(\varphi,s,p)$,
then $\varphi \circ X \in W^{\gamma,r}([a,b])$ and in particular, 
\[[\varphi\circ X]_{\gamma,r}\leq c\: [X]_{\beta,q}^s\: \big\|  U^{1-s}\|D\varphi\|(X) \big\|_{L^p([a,b])}\]
with a constant $c>0$ depending only on $a$, $b$, $m$, $s$, $p$, $q$, $r$, $\beta$ and $\gamma$.
\end{enumerate}
\end{theorem}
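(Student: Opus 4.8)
The plan is to reduce both statements to one pointwise inequality of fractional Sobolev--Poincar\'e type for $BV$ functions: if $\varphi\in BV_{\loc}(\mathbb{R}^m)$ is taken in its precise representative, then for all $x,y\in\mathbb{R}^m$ at which the Riesz potentials below are finite,
\begin{equation}\label{E:plan-pointwise}
|\varphi(x)-\varphi(y)|\le c_{m,s}\,|x-y|^{s}\big(U^{1-s}\|D\varphi\|(x)+U^{1-s}\|D\varphi\|(y)\big).
\end{equation}
Granting \eqref{E:plan-pointwise}, one inserts $x=X(t)$, $y=X(\tau)$ into the Gagliardo double integral defining $[\varphi\circ X]_{\gamma,p}$, respectively $[\varphi\circ X]_{\gamma,r}$, and integrates out the resulting one-dimensional time kernels; the hypotheses $\gamma<s\beta$ and (in (ii)) $\tfrac1p+\tfrac sq\le\tfrac1r$ are precisely what make these integrals finite, and the dependence of the constants on the listed parameters is then routine to track.

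For \eqref{E:plan-pointwise} I would first take $\varphi\in C^\infty$. The standard representation behind the Sobolev and Morrey inequalities gives, with $B:=B\big(\tfrac{x+y}{2},|x-y|\big)$ and $\varphi_B:=|B|^{-1}\int_B\varphi$,
\[|\varphi(x)-\varphi(y)|\le|\varphi(x)-\varphi_B|+|\varphi(y)-\varphi_B|\le c_m\Big(\int_{B(x,2|x-y|)}\frac{|\nabla\varphi(z)|}{|x-z|^{m-1}}\,dz+\int_{B(y,2|x-y|)}\frac{|\nabla\varphi(z)|}{|y-z|^{m-1}}\,dz\Big).\]
Since $|x-z|\le 2|x-y|$ on the first ball and $0<s<1$, one has $|x-z|^{1-m}=|x-z|^{(1-s)-m}|x-z|^{s}\le 2^{s}|x-y|^{s}|x-z|^{(1-s)-m}$, so extending the integral to all of $\mathbb{R}^m$ bounds it by $2^{s}|x-y|^{s}\,U^{1-s}(|\nabla\varphi|\,dz)(x)$; the $y$-integral is handled the same way, and \eqref{E:plan-pointwise} follows for smooth $\varphi$. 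For general $\varphi\in BV_{\loc}(\mathbb{R}^m)$ I would mollify, $\varphi_\varepsilon:=\rho_\varepsilon*\varphi$, use $|\nabla\varphi_\varepsilon|\,dz\le\rho_\varepsilon*\|D\varphi\|$ and the Fubini identity $U^{1-s}(\rho_\varepsilon*\|D\varphi\|)=\rho_\varepsilon*\big(U^{1-s}\|D\varphi\|\big)$, apply the smooth estimate to $\varphi_\varepsilon$, and let $\varepsilon\downarrow0$, using that $\varphi_\varepsilon\to\varphi$ wherever the precise representative is defined and that $\rho_\varepsilon*\big(U^{1-s}\|D\varphi\|\big)\to U^{1-s}\|D\varphi\|$ in $L^{1}_{\loc}(\mathbb{R}^m)$. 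I expect this limit passage --- reconciling the quasi-everywhere defined precise representative of $\varphi$ with the points actually visited by $X$ --- to be the main technical obstacle; it is presumably dealt with in \cite{HTV2020,HTV2022} via the fact that $(s,p)$-variability forces $t\mapsto U^{1-s}\|D\varphi\|(X(t))$ to be finite a.e.\ on $[a,b]$, together with control on the occupation measure of $X$.

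For part (i), $X\in C^\beta([a,b];\mathbb{R}^m)$, so \eqref{E:plan-pointwise} yields $|\varphi(X(t))-\varphi(X(\tau))|\le c\,[[X]]_\beta^{s}\,|t-\tau|^{s\beta}\big(\psi(t)+\psi(\tau)\big)$ with $\psi(t):=U^{1-s}\|D\varphi\|(X(t))$. Substituting this into $[\varphi\circ X]_{\gamma,p}^{p}=\int_a^b\int_a^b|t-\tau|^{-1-\gamma p}|\varphi(X(t))-\varphi(X(\tau))|^{p}\,dt\,d\tau$, using $(a+b)^{p}\le 2^{p-1}(a^{p}+b^{p})$ and symmetry, reduces the problem to bounding a constant times $[[X]]_\beta^{sp}\int_a^b\psi(t)^{p}\big(\int_a^b|t-\tau|^{(s\beta-\gamma)p-1}\,d\tau\big)dt$; since $\gamma<s\beta$ the inner integral is at most a constant depending only on $b-a,s,\beta,\gamma,p$, and taking $p$-th roots gives the claimed bound on $[\varphi\circ X]_{\gamma,p}$. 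Finally $\varphi\circ X\in L^{p}([a,b])$ follows from \eqref{E:plan-pointwise} applied with $\tau$ fixed at a point where $\psi(\tau)<\infty$ and from the boundedness of $X$ on $[a,b]$.

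For part (ii), $X$ lies only in $W^{\beta,q}([a,b];\mathbb{R}^m)$, so the factor $|X(t)-X(\tau)|^{sr}$ is kept rather than H\"older-estimated. After $(a+b)^{r}\le2^{r-1}(a^{r}+b^{r})$ and symmetry, it remains to bound $\int_a^b\psi(t)^{r}\big(\int_a^b|t-\tau|^{-1-\gamma r}|X(t)-X(\tau)|^{sr}\,d\tau\big)dt$. Integrating first in $\tau$ and applying H\"older's inequality with exponents $\tfrac{q}{sr}$ and $\tfrac{q}{q-sr}$ bounds the inner integral by $c\,F(t)^{sr/q}$, where $F(t):=\int_a^b|t-\tau|^{-1-\beta q}|X(t)-X(\tau)|^{q}\,d\tau$, the residual time integral converging again because $\gamma<s\beta$. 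A further H\"older inequality in $t$ with exponents $\tfrac pr$ and $\tfrac{p}{p-r}$ detaches $\psi(t)^{r}$ and leaves $\big(\int_a^b F(t)^{\kappa}\,dt\big)^{(p-r)/p}$ with $\kappa=\tfrac{srp}{q(p-r)}$; the hypothesis $\tfrac1p+\tfrac sq\le\tfrac1r$ is equivalent to $\kappa\le1$, so by Jensen's inequality this is at most a constant times $\big(\int_a^b F\big)^{\kappa(p-r)/p}=[X]_{\beta,q}^{sr}$. Assembling and taking $r$-th roots gives $[\varphi\circ X]_{\gamma,r}\le c\,[X]_{\beta,q}^{s}\,\|\psi\|_{L^{p}([a,b])}$, and $\varphi\circ X\in L^{r}([a,b])$ follows by an analogous anchoring argument using $\tfrac1p+\tfrac sq\le\tfrac1r$.
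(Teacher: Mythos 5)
Your proof is correct and takes essentially the same route as the cited sources \cite{HTV2020,HTV2022}: the pointwise bound $|\varphi(x)-\varphi(y)|\le c|x-y|^s\big(U^{1-s}\|D\varphi\|(x)+U^{1-s}\|D\varphi\|(y)\big)$ is exactly \cite[Proposition C.1]{HTV2020} (stated there via the truncated fractional maximal function $\mathcal{M}_{1-s,|x-y|}\|D\varphi\|\le U^{1-s}\|D\varphi\|$ and proved by the Poincar\'e representation plus mollification, as you sketch), and the subsequent Gagliardo--seminorm bookkeeping with the two H\"older inequalities under $\gamma<s\beta$ and $\tfrac1p+\tfrac sq\le\tfrac1r$ matches the arguments there. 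The precise-representative issue you flag is indeed the only delicate point, and it is resolved in those references exactly as you anticipate, via the finiteness of $t\mapsto U^{1-s}\|D\varphi\|(X(t))$ guaranteed by variability.
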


This gives existence results and estimates for integrals involving $BV$-coefficients in the elementary case, which we quote from \cite[Theorem 2.12 and Corollary 4.36]{HTV2020} and \cite[Corollary 6.4]{HTV2022}. They follow from Proposition \ref{prop:basic-GSL} and Theorem \ref{thm:sobolev-membership} and may be seen as counterparts to Corollary \ref{C:compositioncase} for $BV$-coefficients.

\begin{corollary}\label{C:sobolev-membership}
Suppose that  $0<s,\beta,\delta<1$, $Y\in C^\delta([a,b];\mathbb{R}^d)$ and $X:[a,b]\to\mathbb{R}^m$ is a Borel function.
\begin{enumerate}
\item[(i)] If $\varphi\in BV_{\loc}(\mathbb{R}^m)$, $s\beta+\delta>1$ and $X \in C^{\beta}([a,b];\mathbb{R}^m)\cap V(\varphi,s,1)$, then $\int_a^b\varphi(X)\,dY$ in \eqref{E:basicint} exists and 
\[\Big|\int_a^b\varphi(X)\,dY\Big|\leq c\:\Big\lbrace\|\varphi(X)\|_{L^1([a,b];\mathbb{R}^d)}+[[X]]_\beta^s\| U^{1-s}\|D\varphi\|(X)\|_{L^1([a,b])}\Big\rbrace [[Y]]_\delta.\]
\item[(ii)] If $\varphi\in BV(\mathbb{R}^m)$, $s\beta+\delta>1$, $1\leq p,q<\infty$, $\frac{1}{p}+\frac{s}{q}\leq 1$ and 
$X \in W^{\beta,q}([a,b];\mathbb{R}^m)\cap V(\varphi,s,p)$, then 
$\int_a^b\varphi(X)\,dY$ in \eqref{E:basicint} exists and 
\[\Big|\int_a^b\varphi(X)\,dY\Big|
\leq c\:\Big\lbrace\|\varphi(X)\|_{L^1([a,b];\mathbb{R}^d)}+[X]_{\beta,q}^s\big\|  U^{1-s}\|D\varphi\|(X)
\big\|_{L^p([a,b])}\Big\rbrace [[Y]]_\delta.\]
\end{enumerate}
\end{corollary}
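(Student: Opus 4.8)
The plan is to deduce both parts from the composition theorem, Theorem~\ref{thm:sobolev-membership}, together with the Young-type existence statement, Proposition~\ref{prop:basic-GSL}: Theorem~\ref{thm:sobolev-membership} converts the variability hypothesis into fractional Sobolev regularity of the composition $\varphi\circ X$, and once $\varphi\circ X$ is known to lie in a suitable $W^{\gamma,1}$-space the pair $(\varphi\circ X,Y)$ falls into the Young regime, so $\int_a^b\varphi(X)\,dY$ in \eqref{E:basicint} exists by Proposition~\ref{prop:basic-GSL} and the quantitative bound is read off from the two estimates.

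For part (i), since $s\beta+\delta>1$ I first fix exponents $\gamma$ and $\delta'$ with $1-\delta'<\gamma<s\beta$ and $\delta'<\delta$, which is possible because the strict inequality $s\beta+\delta>1$ leaves room to keep $\gamma+\delta'>1$. By Theorem~\ref{thm:sobolev-membership}(i), the hypothesis $X\in C^\beta([a,b];\mathbb{R}^m)\cap V(\varphi,s,1)$ gives $\varphi\circ X\in W^{\gamma,1}([a,b];\mathbb{R}^d)$ with $[\varphi\circ X]_{\gamma,1}\leq c\: [[X]]_\beta^s\: \| U^{1-s}\|D\varphi\|(X)\|_{L^1([a,b])}$; in particular $\varphi(X)\in L^1([a,b];\mathbb{R}^d)$, so $\|\varphi(X)\|_{W^{\gamma,1}([a,b];\mathbb{R}^d)}$ is finite and bounded by the right-hand side of the asserted inequality. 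Since $Y\in C^\delta([a,b];\mathbb{R}^d)$ embeds continuously into $W^{\delta',\infty}([a,b];\mathbb{R}^d)$, condition \eqref{E:gammatheta} holds for the exponent pair $(\gamma,1)$ and $(\delta',\infty)$, so Proposition~\ref{prop:basic-GSL} applied to $\varphi\circ X$ in place of $X$ yields existence of $\int_a^b\varphi(X)\,dY$. Because $\gamma\cdot 1<1$, part (ii) of the remark following Proposition~\ref{prop:basic-GSL} gives the representation $\int_a^b\varphi(X)\,dY=(-1)^\alpha\int_a^b D_{a+}^\alpha(\varphi(X))(t)\,D_{b-}^{1-\alpha}(Y-Y(b))(t)\,dt$ for $\alpha\in(1-\delta,\gamma)$, which carries no boundary term. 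Bounding $\|D_{a+}^\alpha(\varphi(X))\|_{L^1([a,b];\mathbb{R}^d)}\leq c\:\|\varphi(X)\|_{W^{\gamma,1}([a,b];\mathbb{R}^d)}$ as in the proof of Corollary~\ref{C:compositioncase} (using \cite[Lemma~4.33]{HTV2020} on the leading term, legitimate since $\alpha<\gamma<1$) and $\|D_{b-}^{1-\alpha}(Y-Y(b))\|_{\sup}\leq c\:[[Y]]_\delta$ as in the proof of Theorem~\ref{T:HN} (legitimate since $\delta>1-\alpha$), and then inserting the seminorm bound from Theorem~\ref{thm:sobolev-membership}(i), gives the claimed estimate. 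Independence of the value on the admissible range of $\alpha$ follows as in \cite[Proposition~2.1]{Zahle98}.

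Part (ii) is the same argument with Theorem~\ref{thm:sobolev-membership}(ii) in place of (i): for $\varphi\in BV(\mathbb{R}^m)$ and $X\in W^{\beta,q}([a,b];\mathbb{R}^m)\cap V(\varphi,s,p)$ with $\frac1p+\frac sq\leq 1$, the choice $r=1$ is admissible in Theorem~\ref{thm:sobolev-membership}(ii) and yields $\varphi\circ X\in W^{\gamma,1}([a,b];\mathbb{R}^d)$ with $[\varphi\circ X]_{\gamma,1}\leq c\: [X]_{\beta,q}^s\: \| U^{1-s}\|D\varphi\|(X)\|_{L^p([a,b])}$ for any $\gamma<s\beta$; from here the computation is word for word the same, now producing the factor $[X]_{\beta,q}^s$ instead of $[[X]]_\beta^s$. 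The only points that need care are bookkeeping ones: since $\varphi$ may be discontinuous, $\varphi\circ X$ need not be continuous and the boundary term $\varphi(X)(a)(Y(b)-Y(a))$, whose pointwise factor is not controlled by $\|\varphi(X)\|_{L^1}$, could be an obstruction — but the condition $\gamma<1$ makes the simplified representation of the remark after Proposition~\ref{prop:basic-GSL} available, so this term never appears; and one must spend a little of the slack in $s\beta+\delta>1$ when replacing $C^\delta$ by $W^{\delta',\infty}$. Both are routine once noticed, and all the substance of the corollary is already contained in Theorem~\ref{thm:sobolev-membership} and Proposition~\ref{prop:basic-GSL}.
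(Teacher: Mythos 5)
Your proposal is correct and follows exactly the route the paper indicates: the corollary is stated there as a consequence of Theorem~\ref{thm:sobolev-membership} (variability $\Rightarrow$ $\varphi\circ X\in W^{\gamma,1}$ for $\gamma<s\beta$) combined with Proposition~\ref{prop:basic-GSL}, and your choice of exponents, the use of the $\gamma p<1$ representation to avoid the boundary term and the continuity-at-$a$ issue, and the final bookkeeping mirror the paper's own treatment of Corollary~\ref{C:compositioncase}. No gaps.
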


\begin{remark} Variants of Theorem \ref{thm:sobolev-membership} (ii) and Corollary \ref{C:sobolev-membership} (ii) under the less restrictive hypothesis that $Y\in W^{\delta,v}([a,b];\mathbb{R}^d)$ can easily be stated, but come with notationally more involved conditions.
\end{remark}

\subsection{Existence of integrals in the rough case} Our main result is the following, which is a counterpart of Theorem \ref{T:HN} for Lipschitz coefficients with first order partials in $BV$ and a rough counterpart of Corollary \ref{C:sobolev-membership}.

\begin{theorem}
\label{thm:existence-of-integral-H}
Suppose that $\frac{1}{3}<\beta<\frac{1}{2}$, $(X,Y,X\otimes Y)\in M_{m,d}^\beta([a,b])$, $\varphi\in \lip(\mathbb{R}^m;\mathbb{R}^d)$ with $\partial_i\varphi_j\in BV_{\loc}(\mathbb{R}^m)$ for all $i,j$ and 
\begin{equation}\label{E:scond}
\frac{1}{\beta}-2<s < 1.
\end{equation}
If $X$ is $(s,1)$-variable w.r.t. all $\partial_i\varphi_j$ and for some $\frac12(\beta(1+s)-(1-\beta))<\varepsilon<\beta(1+s)-(1-\beta)$ we have 
\begin{equation}\label{E:segment}
\int_a^b\int_a^b|r-\theta|^{\varepsilon-1}\int_0^1U^{1-s}\|D\partial_i\varphi_j\|(\gamma^{\theta,r}(t))t^s\:dt \,d\theta \, dr<\infty
\end{equation}
for all $i$ and $j$, then $\int_a^b \varphi(X)\,dY$ in \eqref{eq:integral-def} exists and we have 
\begin{multline}
\Big|\int_a^b\varphi(X)\,dY\Big|\leq c\:\sum_{j=1}^d\Big\lbrace \|\varphi_j(X)\|_{L^1([a,b])}+\lip(\varphi_j)[[X]]_{\beta}\\
+\sum_{i=1}^m\int_a^b\int_a^b|r-\theta|^{\varepsilon-1}\int_0^1U^{1-s}\|D\partial_i\varphi_j\|( \gamma^{\theta,r}(t))t^s\:dt \,d\theta\, dr\: [[X]]_\beta^{1+s}\label{E:question}\\
+\sum_{i=1}^m \|U^{1-s}\left\|D\partial_i \varphi_j\right\|(X)\|_{L^1([a,b])}\: [[X]]_\beta^{1+s}\Big\rbrace [[Y^j]]_\beta\\
+c\:\sum_{j=1}^d\sum_{i=1}^m\Big\lbrace \|\partial_i\varphi_j(X)\|_{L^1([a,b])}+
\| U^{1-s}\|D\partial_i\varphi_j\|(X)\|_{L^1([a,b])}[[X]]_\beta^s\Big\rbrace[[(X\otimes Y)_{\cdot,b}^{i,j}]]_{2\beta}.
\end{multline}
The segment  $\gamma^{\theta,r}$ in \eqref{E:segment} and \eqref{E:question} is defined as in \eqref{E:paramline}.
\end{theorem}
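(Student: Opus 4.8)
The plan is to retrace the proof of Theorem~\ref{T:HN} almost verbatim, replacing only the two steps in which the $\lambda$-Hölder continuity of the $\partial_i\varphi_j$ is used by steps that invoke instead the $(s,1)$-variability of $X$ together with hypothesis \eqref{E:segment}. First I would fix $\varepsilon$ in the given range for which \eqref{E:segment} is finite and set $\alpha:=\beta(1+s)-\varepsilon$. The two bounds on $\varepsilon$ then translate exactly into $1-\beta<\alpha<\frac{s\beta+1}{2}$, which is the analogue of \eqref{E:alphacond} with $\lambda$ replaced by $s$ and whose interval is nonempty precisely by \eqref{E:scond}; moreover $0<1-2\beta<2\alpha-1<s\beta$ and $\alpha<\beta(1+s)$, i.e.\ $\beta(1+s)-\alpha-1=\varepsilon-1>-1$. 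The factors coming from $Y$ and $X\otimes Y$ need no new argument: since $\beta>1-\alpha$ one has $\|D_{b-}^{1-\alpha}(Y-Y(b))^j\|_{\sup}\leq c\,[[Y^j]]_\beta$ and $\|D_{b-}^{2-2\alpha}(X\otimes Y)_{\cdot,b}^{i,j}\|_{\sup}\leq c\,[[(X\otimes Y)_{\cdot,b}^{i,j}]]_{2\beta}$, exactly as in the proof of Theorem~\ref{T:HN}. So everything reduces to bounding $\int_a^b|\hat D^\alpha_{a+}\varphi_j(X)(r)|\,dr$ and $\int_a^b|D^{2\alpha-1}_{a+}(\partial_i\varphi_j(X))(r)|\,dr$.

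For the second of these, and for the first summand $\varphi_j(X)(r)(r-a)^{-\alpha}$ of $\hat D^\alpha_{a+}\varphi_j(X)$, I would argue as in the proof of Corollary~\ref{C:compositioncase}, using Theorem~\ref{thm:sobolev-membership}(i) in place of the Hölder composition estimate. Since $2\alpha-1<s\beta$ one may choose $\gamma\in(2\alpha-1,s\beta)$; as $X\in C^\beta([a,b];\R^m)\cap V(\partial_i\varphi_j,s,1)$, Theorem~\ref{thm:sobolev-membership}(i) gives $\partial_i\varphi_j(X)\in W^{\gamma,1}([a,b])$ with $[\partial_i\varphi_j(X)]_{\gamma,1}\leq c\,[[X]]_\beta^{s}\,\|U^{1-s}\|D\partial_i\varphi_j\|(X)\|_{L^1([a,b])}$, whence, by \cite[Lemma 4.33]{HTV2020} and the remarks on fractional derivatives in Section~\ref{S:fraccalc},
\[\int_a^b|D^{2\alpha-1}_{a+}(\partial_i\varphi_j(X))(r)|\,dr\leq c\Big(\|\partial_i\varphi_j(X)\|_{L^1([a,b])}+[[X]]_\beta^{s}\,\|U^{1-s}\|D\partial_i\varphi_j\|(X)\|_{L^1([a,b])}\Big).\]
Together with the bound on $\|D_{b-}^{2-2\alpha}(X\otimes Y)_{\cdot,b}^{i,j}\|_{\sup}$ this produces the last line of \eqref{E:question}; and, as in Theorem~\ref{T:HN}, $\int_a^b|\varphi_j(X)(r)|(r-a)^{-\alpha}\,dr\leq c(\|\varphi_j(X)\|_{L^1([a,b])}+\lip(\varphi_j)[[X]]_\beta)$.

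The core of the proof is the estimate of the compensated part of $\hat D^\alpha_{a+}\varphi_j(X)$, i.e.\ the integral over $a\le\theta\le r\le b$ of $|\varphi_j(X)_{\theta,r}-\sum_i\partial_i\varphi_j(X(\theta))X^i_{\theta,r}|\,(r-\theta)^{-\alpha-1}$. Keeping \eqref{E:stokes} and the first identity in \eqref{eq:Mbound}---which remain valid since $\varphi_j$ is Lipschitz, once $\partial_i\varphi_j$ is read with the representative singled out by the variability of $X$, see the last paragraph---I would replace the Hölder estimate in the final line of \eqref{eq:Mbound} by the pointwise increment bound for functions of bounded variation in terms of Riesz potentials,
\[|\partial_i\varphi_j(x)-\partial_i\varphi_j(y)|\leq c\,|x-y|^{s}\big(U^{1-s}\|D\partial_i\varphi_j\|(x)+U^{1-s}\|D\partial_i\varphi_j\|(y)\big),\]
valid for $\mathcal L^m$-a.e.\ $x,y$ and, more to the point, whenever both Riesz potentials on the right are finite; this is the pointwise inequality underlying Theorem~\ref{thm:sobolev-membership}, cf.\ \cite{HTV2020}. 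Applying it with $x=\gamma^{\theta,r}(t)$ and $y=X(\theta)=\gamma^{\theta,r}(0)$, so that $|x-y|=t\,|X_{\theta,r}|\leq t\,[[X]]_\beta|r-\theta|^\beta$, and integrating in $t\in[0,1]$ yields
\begin{multline*}
\Big|\varphi_j(X)_{\theta,r}-\sum_{i=1}^m\partial_i\varphi_j(X(\theta))X^i_{\theta,r}\Big|\\
\leq c\,[[X]]_\beta^{1+s}\,|r-\theta|^{\beta(1+s)}\sum_{i=1}^m\Big(\int_0^1 U^{1-s}\|D\partial_i\varphi_j\|(\gamma^{\theta,r}(t))\,t^{s}\,dt+U^{1-s}\|D\partial_i\varphi_j\|(X(\theta))\Big).
\end{multline*}
Dividing by $(r-\theta)^{\alpha+1}$ and integrating over $a\le\theta\le r\le b$: since $\beta(1+s)-\alpha-1=\varepsilon-1>-1$, the first group of terms contributes $c\,[[X]]_\beta^{1+s}\sum_i\int_a^b\int_a^b|r-\theta|^{\varepsilon-1}\int_0^1 U^{1-s}\|D\partial_i\varphi_j\|(\gamma^{\theta,r}(t))t^{s}\,dt\,d\theta\,dr$, finite by \eqref{E:segment}, whereas in the second group the $r$-integral is a finite constant (again because $\alpha<\beta(1+s)$), leaving $c\,[[X]]_\beta^{1+s}\sum_i\|U^{1-s}\|D\partial_i\varphi_j\|(X)\|_{L^1([a,b])}$; these are the remaining two terms of \eqref{E:question}. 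Summing the four bounds, multiplying those coming from $\hat D^\alpha_{a+}\varphi_j(X)$ by $[[Y^j]]_\beta$ and the one coming from $D^{2\alpha-1}_{a+}(\partial_i\varphi_j(X))$ by $[[(X\otimes Y)_{\cdot,b}^{i,j}]]_{2\beta}$, and summing over $i,j$, gives \eqref{E:question}. Existence of $\int_a^b\varphi(X)\,dY$ in \eqref{eq:integral-def} and independence of its value on the choice of $\alpha$ then follow as in \cite[Proposition 2.1]{Zahle98} and the end of the proof of Theorem~\ref{T:HN}, noting that \eqref{E:segment}, once finite for one admissible $\varepsilon$, stays finite for all $\alpha$ in a neighbourhood, so the admissible range is a genuine interval.

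The step I expect to be the main obstacle is making \eqref{E:stokes} and the pointwise increment bound rigorous along the specific segments $\gamma^{\theta,r}$ of \eqref{E:paramline}: $\partial_i\varphi_j$ is only an $L^1_{\loc}$-class and, for $m\geq 2$, each such segment is $\mathcal L^m$-null, so it cannot be evaluated on it by a Fubini argument. I would deal with this by first establishing the whole estimate for $\varphi$ replaced by a mollification $\varphi\ast\rho_n$, for which \eqref{E:stokes} and the Riesz-potential increment bound are elementary, with constants independent of $n$ (using $\|D\partial_i(\varphi\ast\rho_n)\|\leq\|D\partial_i\varphi\|\ast\rho_n$, so that the right-hand sides stay controlled by the corresponding quantities for $\varphi$, together with lower semicontinuity of the Riesz potentials), and then passing to the limit in \eqref{eq:integral-def}; alternatively, one can follow \cite{HTV2020,HTV2022} and invoke the $BV$-slicing theory, according to which the hypotheses \eqref{E:segment} and the $(s,1)$-variability of $X$ already pin down a good representative of $\partial_i\varphi_j$ along $X$ and along the relevant segments. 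Either route reduces the remaining work to the routine manipulations of fractional integrals recalled in Section~\ref{S:fraccalc}.
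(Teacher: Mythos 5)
Your proposal is correct and follows essentially the same route as the paper: set $\alpha=\beta(1+s)-\varepsilon$, keep the decomposition \eqref{eq:Mbound} from the proof of Theorem \ref{T:HN}, and replace the H\"older estimate on the increments of $\partial_i\varphi_j$ by the pointwise Riesz-potential bound $|\partial_i\varphi_j(x)-\partial_i\varphi_j(y)|\leq c|x-y|^s\bigl(U^{1-s}\|D\partial_i\varphi_j\|(x)+U^{1-s}\|D\partial_i\varphi_j\|(y)\bigr)$, which together with \eqref{E:segment} and the variability hypothesis yields exactly the four terms of \eqref{E:question}. The representative issue you flag at the end is the only point where you go beyond the paper, which handles it by invoking the maximal-function estimate of \cite[Proposition C.1]{HTV2020} (based on \cite[Lemma 4.1]{AK}) directly, with the finiteness of the Riesz potentials along $X$ and along the segments $\gamma^{\theta,r}$ guaranteed by the hypotheses.
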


Note that $1-\beta<\beta(1+s)$ by (\ref{E:scond}). Hypothesis \eqref{E:segment} may be seen as a \emph{weighted variability condition on $X$ w.r.t. the partial derivatives $\partial_i\varphi_j$ along line segments $\gamma^{\theta,r}$}.

We turn to a proof of Theorem \ref{thm:existence-of-integral-H}. Given $0<\gamma<m$, $R>0$ and a nonnegative Radon measure $\nu$ on $\mathbb{R}^m$, we define the \emph{truncated fractional Hardy-Littlewood maximal function of $\nu$ of order $\gamma$} by $\mathcal{M}_{\gamma,R}\nu(x):=\sup_{0<r<R}r^{\gamma-m}\nu(B(x,r))$, $x\in \mathbb{R}^m$. Obviously 
$\mathcal{M}_{\gamma,R}\nu(x)\leq U^{\gamma}\nu(x)$, $x\in\mathbb{R}^m$.

\begin{proof}
We can mimic \eqref{eq:Mbound} and estimate
\begin{equation}\label{E:diff}
\Big|\varphi_j(X)_{\theta,r}-\sum_{i=1}^m \partial_i\varphi_j(X(\theta))X_{\theta,r}\Big|\leq \:|X_{\theta,r}|\:\sum_{i=1}^m\int_0^1 |\partial_i \varphi_j(\gamma^{\theta,r}(t))-\partial_i \varphi_j(X(\theta))|\:dt.
\end{equation}
Using \cite[Proposition C.1]{HTV2020}, which is based on \cite[Lemma 4.1]{AK}, we find that
\begin{align}
&|\partial_i \varphi_j(\gamma^{\theta,r}(t))-\partial_i \varphi_j(X(\theta))|\notag\\
&\leq c|\gamma^{\theta,r}(t)-X(\theta)|^s\big\lbrace \mathcal{M}_{1-s,|\gamma^{\theta,r}(t)-X(\theta)|}\left\|D\partial_i \varphi_j\right\|(\gamma^{\theta,r}(t))\notag\\
&\hspace{180pt} +\mathcal{M}_{1-s,|\gamma^{\theta,r}(t)-X(\theta)|}\left\|D\partial_i \varphi_j\right\|(X(\theta))\big\rbrace\notag\\
&\leq c\:t^s|X_{\theta,r}|^s\big\lbrace U^{1-s}\left\|D\partial_i \varphi_j\right\|(\gamma^{\theta,r}(t))+U^{1-s}\left\|D\partial_i \varphi_j\right\|(X(\theta))\big\rbrace.\notag
\end{align}
Consequently \eqref{E:diff} is bounded by 
\[c\:|X_{\theta,r}|^{1+s}\sum_{i=1}^m\left\lbrace \int_0^1 U^{1-s}\left\|D\partial_i \varphi_j\right\|( \gamma^{\theta,r}(t)) t^s \,dt+\frac{1}{1+s} U^{1-s}\left\|D\partial_i \varphi_j\right\|(X(\theta))\right\rbrace. \]
Now set $\alpha:=\beta(1+s)-\varepsilon$. Then $1-\beta<\alpha<\frac{s\beta+1}{2}<\beta(1+s)$. Similarly as in the proof of Theorem \ref{T:HN}, we therefore have 
\begin{multline}
\int_a^b \vert  \hat{D}^\alpha_{a+}\varphi_j(X)(r)\vert\: dr\leq c\:\Big\lbrace \|\varphi_j(X)\|_{L^1([a,b])}+\lip(\varphi_j)[[X]]_{\beta}\notag\\
+\sum_{i=1}^m\int_a^b\int_a^b|r-\theta|^{\beta(1+s)-\alpha-1}\int_0^1U^{1-s}\|D\partial_i\varphi_j\|( \gamma^{\theta,r}(t))t^s\:dt\, d\theta\, dr\: [[X]]_\beta^{1+s}\notag\\
+\sum_{i=1}^m \|U^{1-s}\left\|D\partial_i \varphi_j\right\|(X)\|_{L^1([a,b])}\: [[X]]_\beta^{1+s}\Big\rbrace
\end{multline}
and
\[\int_a^b|D_{a+}^{2\alpha-1}(\partial_i \varphi_j(X))(r)|\:dr\leq c\:\Big\lbrace \|\partial_i\varphi_j(X)\|_{L^1([a,b])}+
\| U^{1-s}\|D\partial_i\varphi_j\|(X)\|_{L^1([a,b])}[[X]]_\beta^s\Big\rbrace.\]
The remaining arguments are the same as in the proof of Theorem \ref{T:HN}.
\end{proof}

In the one-dimensional case $m=1$ condition (\ref{E:segment}) follows from variability.

\begin{lemma}\label{L:businessasusual}
Let $0<s<1$, $\varepsilon>0$ and $X\in C([a,b],\mathbb{R})$. If $\psi\in BV_{\loc}(\mathbb{R})$ and $X$ is $(s,1)$-variable w.r.t. $\psi$, then $z\mapsto \int_a^b\int_a^b |r-\theta|^{1-\varepsilon}\int_0^1|\gamma^{\theta,r}(t)-z|^{-s}\,dt\, d\theta\, dr$ is in $L^1(\mathbb{R},\|D\psi\|)$ and (\ref{E:segment}) holds with $\psi$ in place of  $\partial_i\varphi_j$.
\end{lemma}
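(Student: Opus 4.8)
The plan is to integrate out the variable $z$ against $\|D\psi\|$ by Tonelli's theorem and thereby reduce the assertion to the $(s,1)$-variability hypothesis; the bridge is an elementary one-dimensional estimate that controls the Riesz potential $U^{1-s}\|D\psi\|$ along the chord $\gamma^{\theta,r}$ by its two endpoint values. The key step, which I would isolate first, is the inequality
\[
\int_0^1\big|(1-t)x_1+tx_2-z\big|^{-s}\,dt\ \le\ \frac{2}{1-s}\big(|x_1-z|^{-s}+|x_2-z|^{-s}\big)
\]
for all $x_1,x_2,z\in\R$, read in $[0,+\infty]$ with $0^{-s}:=+\infty$. One proves this by a case split: if $x_1$ and $x_2$ lie weakly on one side of $z$, then so does every convex combination $(1-t)x_1+tx_2$, hence $|(1-t)x_1+tx_2-z|\ge\min(|x_1-z|,|x_2-z|)$ and the left-hand side is at most $\min(|x_1-z|,|x_2-z|)^{-s}$; if $z$ lies strictly between $x_1$ and $x_2$, the substitution $w=(1-t)x_1+tx_2$ turns the left-hand side into
\[
\frac{1}{|x_1-x_2|}\int_{x_1\wedge x_2}^{x_1\vee x_2}|w-z|^{-s}\,dw=\frac{|x_1-z|^{1-s}+|x_2-z|^{1-s}}{(1-s)\big(|x_1-z|+|x_2-z|\big)},
\]
and bounding the numerator by $2\max(|x_1-z|,|x_2-z|)^{1-s}$ and the denominator below by $(1-s)\max(|x_1-z|,|x_2-z|)$ gives $\tfrac{2}{1-s}\min(|x_1-z|,|x_2-z|)^{-s}$. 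Taking $x_1=X(\theta)$, $x_2=X(r)$ and recalling $\gamma^{\theta,r}(t)=(1-t)X(\theta)+tX(r)$ yields
\[
\int_0^1|\gamma^{\theta,r}(t)-z|^{-s}\,dt\ \le\ \frac{2}{1-s}\big(|X(\theta)-z|^{-s}+|X(r)-z|^{-s}\big),\qquad z\in\R,\ \theta,r\in[a,b].
\]

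From here the argument is routine. Write $g(z)$ for the triple integral in the statement. Since $X$ is continuous, hence Borel, the integrand is jointly Borel and nonnegative, so $g$ is measurable and Tonelli's theorem applies at each step. Inserting the last bound and using the $\theta\leftrightarrow r$ symmetry of the weight,
\[
g(z)\ \le\ \frac{2}{1-s}\int_a^b\int_a^b|r-\theta|^{1-\varepsilon}\big(|X(\theta)-z|^{-s}+|X(r)-z|^{-s}\big)\,d\theta\,dr\ \le\ c\int_a^b|X(\theta)-z|^{-s}\,d\theta,
\]
where $c=c(a,b,s,\varepsilon)<\infty$ since $\sup_{\theta\in[a,b]}\int_a^b|r-\theta|^{1-\varepsilon}\,dr<\infty$ (and likewise $\sup_{\theta\in[a,b]}\int_a^b|r-\theta|^{\varepsilon-1}\,dr\le\tfrac{2}{\varepsilon}(b-a)^{\varepsilon}<\infty$ because $\varepsilon>0$). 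Integrating in $z$ against $\|D\psi\|$ and exchanging the order of integration,
\[
\int_{\R}g(z)\,\|D\psi\|(dz)\ \le\ c\int_a^b\Big(\int_{\R}|X(\theta)-z|^{-s}\,\|D\psi\|(dz)\Big)\,d\theta=c\int_a^b U^{1-s}\|D\psi\|(X(\theta))\,d\theta<\infty
\]
by the $(s,1)$-variability of $X$ w.r.t.\ $\psi$; hence $g\in L^1(\R,\|D\psi\|)$. The same computation, now carried out with the weight $|r-\theta|^{\varepsilon-1}$ and the extra harmless factor $t^s\le 1$, gives $\int_a^b\int_a^b|r-\theta|^{\varepsilon-1}\int_0^1 U^{1-s}\|D\psi\|(\gamma^{\theta,r}(t))\,t^s\,dt\,d\theta\,dr<\infty$, i.e.\ \eqref{E:segment} with $\psi$ in place of $\partial_i\varphi_j$.

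The main obstacle is the preliminary inequality, and that is precisely where one-dimensionality enters: the relevant Riesz kernel is $|\cdot|^{-s}$, whose exponent $-s$ exceeds $-1$, so it remains integrable along the one-dimensional chord $\gamma^{\theta,r}$, and in $\R$ a convex combination of $x_1$ and $x_2$ can be no closer to $z$ than the nearer endpoint unless $z$ separates them, in which case one endpoint is already close. For $m\ge 2$ the kernel is $|\cdot|^{1-s-m}$ with exponent $\le -1-s<-1$, too singular to integrate along a segment, and a chord can pass near $z$ while both endpoints stay far away; this is why the lemma is confined to $m=1$. Everything after the preliminary inequality---measurability of $g$, finiteness of the weight integrals, the applications of Fubini--Tonelli---is bookkeeping.
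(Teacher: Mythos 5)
Your proof is correct and follows essentially the same route as the paper's: both reduce the chord integral $\int_0^1|\gamma^{\theta,r}(t)-z|^{-s}\,dt$ to the endpoint values $|X(\theta)-z|^{-s}+|X(r)-z|^{-s}$ by the same case split (both endpoints weakly on one side of $z$ versus $z$ strictly between them, with the same change of variables along the segment), and then integrate out $(\theta,r)$ and $z$ via Tonelli and the $(s,1)$-variability hypothesis. The only cosmetic difference is that you also treat the weight $|r-\theta|^{1-\varepsilon}$ appearing in the lemma's first display literally (harmless here, and in any case the paper's own proof uses $|r-\theta|^{\varepsilon-1}$ throughout, matching \eqref{E:segment}).
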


\begin{proof}
Note first that $\sup_{a\leq r\leq b}\int_a^b|r-\theta|^{\varepsilon-1}d\theta\leq \frac{2}{\varepsilon}(b-a)^\varepsilon=:c_{a,b,\varepsilon}$. For any $z\in\mathbb{R}$ we have 
\[\int_a^b\int_a^b |r-\theta|^{1-\varepsilon}\mathbf{1}_{\{X_{\theta,r}=0\}}\int_0^1|\gamma^{\theta,r}(t)-z|^{-s}\,dt\, d\theta\, dr\leq c_{a,b,\varepsilon}\int_a^b|X(r)-z|^{-s}dr.\]
For $r,\theta\in [a,b]$ such that $X_{\theta,r}\neq 0$ let $I^{\theta,r}\subset \mathbb{R}$ be the open interval with endpoints $X(\theta)$ and $X(r)$. If $z\in \mathbb{R}\setminus I^{\theta,r}$, then $|\gamma^{\theta,r}(t)-z|\geq \min(|X(r)-z|, |X(\theta)-z|)$. For any $z\in\mathbb{R}$ we therefore have
\[\int_a^b\int_a^b |r-\theta|^{\varepsilon-1}\mathbf{1}_{\{X_{\theta,r}\neq 0,\: z\notin I^{\theta,r}\}}\int_0^1|\gamma^{\theta,r}(t)-z|^{-s}\,dt\, d\theta\, dr\leq 2c_{a,b,\varepsilon}\int_a^b|X(r)-z|^{-s}dr.\]
For $z\in I^{\theta,r}$, let $t(z)=\frac{z-X(\theta)}{X_{\theta,r}}$. Then $t(z)\in (0,1)$, $z=t(z)X(r)+(1-t(z))X(\theta)$ and
\begin{align}
\int_0^1|\gamma^{\theta,r}(t)-z|^{-s}\,dt&= \int_0^{t(z)}|tX_{\theta,r}+X(\theta)-z|^{-s}\,dt+\int_{t(z)}^1|(1-t)X_{r,\theta}+X(r)-z|^{-s}\,dt\notag\\
&=t(z)\int_0^1(1-\tau)^{-s}\,d\tau\,|X(\theta)-z|^{-s}+(1-t(z))\int_0^1\tau^{-s}\,d\tau\,|X(r)-z|^{-s}\notag\\
&\leq \frac{1}{1-s}(|X(\theta)-z|^{-s}+|X(r)-z|^{-s});\notag
\end{align}
here we have used the change of variables $\tau=\frac{t}{t(z)}$ for the first integral and $\tau=\frac{t-t(z)}{1-t(z)}$ for the second. For any $z\in\mathbb{R}$ we therefore have
\[\int_a^b\int_a^b |r-\theta|^{\varepsilon-1}\mathbf{1}_{\{X_{\theta,r}\neq 0,\: z \in I^{\theta,r}\}}\int_0^1|\gamma^{\theta,r}(t)-z|^{-s}t^s\,dt\, d\theta\, dr\leq \frac{2c_{a,b,\varepsilon}}{1-s}\int_a^b|X(r)-z|^{-s}dr.\]
\end{proof}

\begin{corollary}\label{C:businessasusual}
Let $0<s<1$, $\varepsilon>0$ and $X\in C([a,b],\mathbb{R})$. If $\sup_{z\in\mathbb{R}}\int_a^b|X(r)-z|^{-s}dr<\infty$, then 
\begin{equation}\label{E:unibdsegment}
\sup_{z\in\mathbb{R}}\int_a^b\int_a^b |r-\theta|^{\varepsilon-1}\int_0^1|\gamma^{\theta,r}(t)-z|^{-s}\,dt \,d\theta \,dr<\infty,
\end{equation}
$X \in V(s,1,\psi)$ for any $\psi\in BV(\mathbb{R})$ and also (\ref{E:segment}) holds with any $\psi$ in place of  $\partial_i\varphi_j$.
\end{corollary}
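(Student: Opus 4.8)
The plan is to observe that the estimate proved in Lemma~\ref{L:businessasusual} is in fact uniform in $z$, and then to propagate that uniform bound through the Riesz potentials $U^{1-s}$ by Tonelli's theorem. Concretely, I would first revisit the computation in the proof of Lemma~\ref{L:businessasusual}: splitting the $(r,\theta)$-integration over the three sets $\{X_{\theta,r}=0\}$, $\{X_{\theta,r}\neq 0,\ z\notin I^{\theta,r}\}$ and $\{X_{\theta,r}\neq 0,\ z\in I^{\theta,r}\}$, one sees that each contribution is bounded by a universal multiple of $c_{a,b,\varepsilon}\int_a^b|X(r)-z|^{-s}\,dr$ --- in the third region this comes from the same change of variables as in Lemma~\ref{L:businessasusual} together with $\int_0^1\tau^{-s}\,d\tau=\int_0^1(1-\tau)^{-s}\,d\tau=\tfrac{1}{1-s}$, and one may discard the factor $t^s\le 1$ to get the (stronger) unweighted bound. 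Since the constants $c_{a,b,\varepsilon}=\tfrac{2}{\varepsilon}(b-a)^\varepsilon$ and $\tfrac{1}{1-s}$ carry no dependence on $z$, taking the supremum over $z\in\mathbb{R}$ and using the hypothesis $\sup_z\int_a^b|X(r)-z|^{-s}\,dr<\infty$ gives \eqref{E:unibdsegment} at once.

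Next I would record that for $m=1$ the Riesz potential of order $1-s$ is $U^{1-s}\|D\psi\|(x)=\int_{\mathbb{R}}|x-z|^{-s}\,\|D\psi\|(dz)$, and apply Tonelli's theorem (legitimate since all integrands are nonnegative and Borel, so no integrability hypothesis is needed for the interchange). For $\psi\in BV(\mathbb{R})$ this gives
\[\int_a^b U^{1-s}\|D\psi\|(X(r))\,dr=\int_{\mathbb{R}}\Big(\int_a^b|X(r)-z|^{-s}\,dr\Big)\,\|D\psi\|(dz)\le\Big(\sup_z\int_a^b|X(r)-z|^{-s}\,dr\Big)\|D\psi\|(\mathbb{R})<\infty,\]
which is exactly $(s,1)$-variability of $X$ w.r.t. $\psi$ (integrability of $X$ over $[a,b]$ being automatic from continuity). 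The same interchange applied to the triple integral, now using $t^s\le 1$ and the uniform bound \eqref{E:unibdsegment}, yields
\[\int_a^b\int_a^b|r-\theta|^{\varepsilon-1}\int_0^1 U^{1-s}\|D\psi\|(\gamma^{\theta,r}(t))\,t^s\,dt\,d\theta\,dr\le\Big(\text{the supremum in }\eqref{E:unibdsegment}\Big)\|D\psi\|(\mathbb{R})<\infty,\]
i.e. \eqref{E:segment} holds with $\psi$ in place of $\partial_i\varphi_j$.

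I do not expect a genuine obstacle here: the analytic heart is already contained in Lemma~\ref{L:businessasusual}, and the only points that need mild care are checking that the constants in that lemma's estimate really are $z$-independent (they are, being expressed solely in terms of $a$, $b$, $\varepsilon$ and $s$), and confirming that Tonelli's theorem applies --- which it does, since every integrand appearing is nonnegative and jointly measurable.
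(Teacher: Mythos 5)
Your proof is correct and follows essentially the same route as the paper: the uniform-in-$z$ bound \eqref{E:unibdsegment} is read off from the $z$-independent constants in the proof of Lemma~\ref{L:businessasusual}, and both \eqref{E:segment} and the variability claim then follow by integrating against $\|D\psi\|$ with Tonelli and $\|D\psi\|(\mathbb{R})<\infty$. The only cosmetic difference is that the paper delegates the variability statement to a citation of an earlier result, whereas you supply the (identical, one-line) Tonelli argument directly.
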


\begin{proof}
The claimed variability of $X$ follow using \cite[Proposition 4.8]{HTV2020}. Lemma \ref{L:businessasusual} gives (\ref{E:unibdsegment}), and (\ref{E:unibdsegment}) gives (\ref{E:segment}) with $\psi$ in place of  $\partial_i\varphi_j$.
\end{proof}

\begin{remark}
\begin{enumerate}
\item[(i)] In the context of rough differential equations driven by fractional Brownian motion $B^H$ with Hurst index $\frac13<H<\frac12$ the regularity of the $\varphi_j$ required for the existence of solutions is differentiability with $\lambda$-H\"older first order partials $\partial_i\varphi_j$, where $\frac{1}{H}-1<1+\lambda$, see \cite[Theorems 10.18 and 10.38]{FrizVictoir}. This is in line with (\ref{E:lambdacond}). Uniqueness holds if at least $\frac{1}{H}=1+\lambda$, see \cite[Theorem 10.43]{FrizVictoir}. In Theorem \ref{thm:existence-of-integral-H} the first order partials $\partial_i\varphi_j$ of the coefficient $\varphi$ may be discontinuous, as long as variability and (\ref{E:segment}) are ensured. 
\item[(ii)] By a careful use of H\"older's inequality similarly as in \cite{HTV2022} one can relax the H\"older assumptions on $X$, $Y$ and $X\otimes Y$ to low degrees of Sobolev regularity; Theorem \ref{thm:existence-of-integral-H} then is a ``limit case''. In view of recent results, \cite{LPT2021}, this might be interesting.
\end{enumerate}
\end{remark}

\section{Applications to Gaussian processes}\label{S:Gauss}

In dimension $m=1$ locally bounded local times and \cite[Propositions 4.14 and Corollary 4.13]{HTV2020} can ensure the hypothesis of Corollary \ref{C:businessasusual}. We give a prototype example. 
 
\begin{example}
If $0\leq a<b<\infty$ and $B^H$ is a one-dimensional fractional Brownian motion over a probability space $(\Omega,\mathcal{F},\mathbb{P})$, then there is an event $\Omega_0\in\mathcal{F}$ with $\mathbb{P}(\Omega_0)=1$ such that for all $\omega\in \Omega_0$, all $0<s<1$ and all $\varepsilon>0$ we have $\sup_{z\in\mathbb{R}}\int_a^b|B^H(r,\omega)-z|^{-s}dr<\infty$. In the case $\frac13<\beta<H\leq \frac12$ there is an event $\Omega_1\in\mathcal{F}$ with $\mathbb{P}(\Omega_1)=1$ such that for all $\omega\in \Omega_1$, $s$ as in (\ref{E:scond}) and $\varepsilon>0$, the realizations $(B^H(\omega),B^H(\omega),(B^H\otimes B^H)(\omega))$ of $(B^H,B^H,(B^H\otimes B^H))$ are in $M_{1,1}^\beta([a,b])$ and for any $\varphi\in \lip(\mathbb{R},\mathbb{R})$ with $\varphi'\in BV(\mathbb{R})$ the path $X:=B^H(\omega)$ is $(s,1)$-variable w.r.t. $\varphi'$ and (\ref{E:segment}) holds. Consequently Theorem \ref{thm:existence-of-integral-H} ensures the existence of $\int_a^b\varphi(B^H(\omega))dB^H(\omega)$  for all $\omega\in \Omega_1$ and all $\varphi$ as specified.
\end{example}

For general $m$, let $\mu$ be a finite nonnegative Borel measure on $\mathbb{R}^m$ and let $A_{m,\mu}$ be the class of all $\psi\in BV(\mathbb{R}^m)$ such that $\|D\psi\|\leq \mu$. Suppose that $(X(t)=(X^1(t),...,X^m(t)))_{t\geq 0}$ is an $m$-dimensional Gaussian process over a probability space $(\Omega,\mathcal{F},\mathbb{P})$ with independent components $X^i$ having a common covariance function $R(r,\theta):=\mathbb{E}[X^1(r)X^1(\theta)]$. We write $N(0,\Sigma)$ for the centered normal distribution on $\mathbb{R}^m$ with covariance matrix $\Sigma$. As in \cite[Lemma 3.1]{Huang2020} we can see that for $\sigma^2>0$ and $Z \sim N(0,\sigma^2I)$ we have 
\begin{equation}\label{E:momentZ}    
    \mathbb{E}|Z-z|^{-m+1-s} \leq c\:\big( \sigma^{-m+1-s} \wedge |z|^{-m+1-s}\big),\quad z\in \mathbb{R}^m,
\end{equation}
with a constant $c>0$ that does not depend on $\sigma^2$ or $z$. Here $I$ is the unit matrix. If 
\begin{equation}\label{E:needed}
\mathbb{E}\int_a^b\int_a^b|r-\theta|^{\varepsilon-1}\int_0^1U^{1-s}\mu(tX(r) + (1-t)X(\theta))\:dt \,d\theta \, dr<\infty,
\end{equation}
then there is an event $\Omega_{s,\varepsilon,\mu}\in \mathcal{F}$ with $\mathbb{P}(\Omega_{s,\varepsilon,\mu})=1$ such that for any $\omega\in \Omega_{s,\varepsilon,\mu}$ and any $\psi\in A_{m,\mu}$ condition (\ref{E:segment}) holds with $\psi$ in place of $\partial_i\varphi_j$.

\begin{lemma}\label{L:Lauri}
Let $0<s<1$, $\varepsilon>0$. Suppose that $R(r,\theta)\geq 0$ for all $r$ and $\theta$ and that 
$C_\mu:=\int_{\mathbb{R}^m}\int_a^b\big(\left[R(\theta,\theta)\right]^{\frac{-m+1-s}{2}} \wedge |z|^{-m+1-s}\big) \,d\theta\,\mu(dz) < \infty$.
Then (\ref{E:needed}) holds.
\end{lemma}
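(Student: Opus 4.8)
The plan is to bring the expectation inside all the (nonnegative) integrals by Tonelli and then exploit Gaussianity of the line-segment points $tX(r)+(1-t)X(\theta)$ together with the moment estimate (\ref{E:momentZ}). Since $U^{1-s}\mu(x)=\int_{\mathbb{R}^m}|x-z|^{-m+1-s}\,\mu(dz)$, the left-hand side of (\ref{E:needed}) equals
\[
\int_a^b\int_a^b|r-\theta|^{\varepsilon-1}\int_0^1\int_{\mathbb{R}^m}\mathbb{E}\big|tX(r)+(1-t)X(\theta)-z\big|^{-m+1-s}\,\mu(dz)\,dt\,d\theta\,dr .
\]
Because the components $X^i$ are independent with common covariance $R$, for each fixed $(t,r,\theta)$ the vector $tX(r)+(1-t)X(\theta)$ is $N(0,\sigma^2(t,r,\theta)I)$ with $\sigma^2(t,r,\theta)=t^2R(r,r)+2t(1-t)R(r,\theta)+(1-t)^2R(\theta,\theta)$, so by (\ref{E:momentZ}) the inner expectation is at most $c\,\big(\sigma(t,r,\theta)^{-m+1-s}\wedge|z|^{-m+1-s}\big)$.

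Next I would use the hypothesis $R\ge 0$ to discard the cross term, which gives $\sigma^2(t,r,\theta)\ge t^2R(r,r)+(1-t)^2R(\theta,\theta)$. Splitting the $t$-integral at $\tfrac12$: on $[0,\tfrac12]$ one has $\sigma^2\ge\tfrac14R(\theta,\theta)$, hence $\sigma^{-m+1-s}\le c\,R(\theta,\theta)^{\frac{-m+1-s}{2}}$, and symmetrically on $[\tfrac12,1]$ one has $\sigma^2\ge\tfrac14R(r,r)$, hence $\sigma^{-m+1-s}\le c\,R(r,r)^{\frac{-m+1-s}{2}}$ (with the convention $0^{\frac{-m+1-s}{2}}=+\infty$, so that when $R(\theta,\theta)=0$ or $R(r,r)=0$ the bound simply reduces to the $|z|^{-m+1-s}$ alternative in (\ref{E:momentZ}), which stays valid even though the Gaussian then degenerates). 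Combining with the $\wedge|z|^{-m+1-s}$ factor,
\[
\int_0^1\mathbb{E}\big|tX(r)+(1-t)X(\theta)-z\big|^{-m+1-s}\,dt\le c\Big[\big(R(\theta,\theta)^{\frac{-m+1-s}{2}}\wedge|z|^{-m+1-s}\big)+\big(R(r,r)^{\frac{-m+1-s}{2}}\wedge|z|^{-m+1-s}\big)\Big].
\]

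Finally I would absorb the kernel $|r-\theta|^{\varepsilon-1}$ using the elementary bound $\int_a^b|r-\theta|^{\varepsilon-1}\,dr\le\frac{2}{\varepsilon}(b-a)^\varepsilon=:c_{a,b,\varepsilon}$, uniformly in $\theta\in[a,b]$, already used in the proof of Lemma \ref{L:businessasusual}. The first summand above depends only on $(\theta,z)$, so integrating in $r$ first, then in $\theta$, then against $\mu$, yields $c\,c_{a,b,\varepsilon}\int_{\mathbb{R}^m}\int_a^b\big(R(\theta,\theta)^{\frac{-m+1-s}{2}}\wedge|z|^{-m+1-s}\big)\,d\theta\,\mu(dz)=c\,c_{a,b,\varepsilon}\,C_\mu$; the second summand is handled identically with the roles of $r$ and $\theta$ exchanged, again contributing $c\,c_{a,b,\varepsilon}\,C_\mu$. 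Hence the left-hand side of (\ref{E:needed}) is bounded by $c\,c_{a,b,\varepsilon}\,C_\mu<\infty$. The only step requiring real care is the variance lower bound: nonnegativity of $R$ is precisely what allows one to drop the cross term and control $\sigma^2$ from below by $R(\theta,\theta)$ alone (resp.\ $R(r,r)$), and the borderline cases $R(\theta,\theta)=0$ or $R(r,r)=0$, where the relevant Gaussian degenerates, are absorbed by the $|z|^{-m+1-s}$ part of (\ref{E:momentZ}).
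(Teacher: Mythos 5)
Your proposal is correct and follows essentially the same route as the paper's proof: Tonelli, the Gaussian law $N(0,\sigma^2_{t,r,\theta}I)$ of the segment point, the moment bound \eqref{E:momentZ}, the variance lower bounds $\sigma^2_{t,r,\theta}\geq\frac14 R(r,r)$ for $t\geq\frac12$ and $\geq\frac14 R(\theta,\theta)$ for $t<\frac12$ (using $R\geq 0$), and finally the kernel bound $c_{a,b,\varepsilon}$ to reach $C_\mu$. Your extra remark on the degenerate case $\sigma^2=0$ is a welcome bit of care the paper leaves implicit.
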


\begin{proof}
By the stated assumptions we have $tX(r) +(1-t)X(\theta) \sim N(0,\sigma_{t,r,\theta}^2 I)$ with 
\[\sigma^2_{t,r,\theta} = \mathbb{E}\left(tX^1(r)+(1-t)X^1(\theta)\right)^2= t^2 R(r,r)+(1-t)^2R(\theta,\theta) + 2t(1-t)R(r,\theta).\]
By (\ref{E:momentZ}) and since $
\sigma_{t,r,\theta}^2 \geq \frac14 R(r,r)
$
for $t\geq \frac12$ and $\sigma_{t,r,\theta}^2 \geq \frac14 R(\theta,\theta)$ for $t< \frac12$, we have
\[\int_a^b\int_a^b |r-\theta|^{\varepsilon-1} \int_0^1 \int_{\mathbb{R}^m} \mathbb{E}|tX(r) + (1-t)X(\theta)  - z|^{-m+1-s}\mu(dz)\,dt \, d\theta\, dr\leq 2c_{a,b,\varepsilon}C_\mu.\]
\end{proof} 

\begin{remark}
A sufficient condition to ensure the existence of $\Omega_{s,\mu}\in\mathcal{F}$ with $\mathbb{P}(\Omega_{s,\mu})=1$ such that $X(\omega)\in V(s,1,\psi)$ for any $\omega\in \Omega_{s,\mu}$ and $\psi\in A_{m,\mu}$ is that 
\begin{equation}\label{E:expect}
\mathbb{E}\int_a^bU^{1-s}\mu(X(\theta))\,d\theta<\infty.
\end{equation}
\end{remark} 

\begin{example}
If $\theta\mapsto R(\theta,\theta)$ is nondecreasing and $\int_a^b R(\theta,\theta)^{\frac{-m+1-s}{2}}d\theta<\infty$,
then by similar arguments as in \cite[Examples 4.23]{HTV2020} we have (\ref{E:expect}), and by Lemma \ref{L:Lauri} also (\ref{E:needed}) holds. Consequently there is an event $\Omega'_{s,\varepsilon,\mu}\in\mathcal{F}$ with $\mathbb{P}(\Omega'_{s,\varepsilon,\mu})=1$ such that for any $\omega\in \Omega'_{s,\varepsilon,\mu}$ and any $\varphi\in \lip(\mathbb{R}^m;\mathbb{R}^d)$ with $\partial_i\varphi_j\in A_{m,\mu}$ for all $i,j$ we have $X(\omega)\in V(s,1,\partial_i\varphi_j)$ and the conditions (\ref{E:segment}) hold for $X(\omega)$. 

This happens in particular if $R(0,0)>0$ or if $a>0$. These cases ensure the applicability of Theorem \ref{thm:existence-of-integral-H} to H\"older continuous stationary Gaussian processes. 

It also happens if $R(0,0)=0$ and $\mathbb{E}(X^1(r)-X^1(\theta))^2=c(|r-\theta|^{2H}+o(|r-\theta|^{2H}))$ with some $0<H<1$. Gaussian processes satisfying these conditions include fractional Brownian motions $B^H$ and basically all fractional type Gaussian processes. In this situation $R(\theta,\theta)\geq c'\theta^{2H}$ for small $\theta>0$, so that $(m-1+s)H<1$ is needed if $a=0$. Since $\frac13<H\leq \frac12$ in Theorem \ref{thm:existence-of-integral-H}, we will then need $m=1,2$ to make sure there is an event of probability one for whose elements $\omega$ the integral
$\int_a^b\varphi(B^H(\omega))dB^H(\omega)$ exists for all $\varphi$ as specified above.
\end{example}

\begin{example}
Suppose that $a=0$, $\mu$ has compact support and $U^{1-s}\mu(0)<+\infty$. By Lemma \ref{L:Lauri} condition (\ref{E:needed}) holds. If $m\geq 3$ and $B^H$ is a fractional Brownian motion $B^H$ with Hurst index $H>\frac13$, then $mH>1$ and $U^{1-s+1/H}\mu(0)<+\infty$, so that by \cite[Examples 4.26 and Corollary 4.25]{HTV2020} the paths of $B^H$ are $\mathbb{P}$-a.s. $(s,1)$-variable w.r.t. all $\psi\in A_{m,\mu}$. If $m\geq 3$ and $\frac13<H\leq \frac12$, then we can find an event of probability one for whose elements $\omega$ the integral
$\int_a^b\varphi(B^H(\omega))dB^H(\omega)$ exists for all $\varphi\in \lip(\mathbb{R}^m;\mathbb{R}^d)$ with all $\partial_i\varphi_j$ in $ A_{m,\mu}$.
\end{example}

\providecommand{\bysame}{\leavevmode\hbox to3em{\hrulefill}\thinspace}
\providecommand{\MR}{\relax\ifhmode\unskip\space\fi MR }
% \MRhref is called by the amsart/book/proc definition of \MR.
\providecommand{\MRhref}[2]{%
  \href{http://www.ams.org/mathscinet-getitem?mr=#1}{#2}
}
\providecommand{\href}[2]{#2}


\begin{thebibliography}{10}


\bibitem{AK}
D. Aalto and J. Kinnunen, \emph{Maximal functions in {S}obolev spaces},
  Sobolev spaces in mathematics. {I}, Int. Math. Ser. (N. Y.), vol.~8,
  Springer, New York, 2009, pp.~25--67. \MR{2747071}


\bibitem{AH96}
D.~R. Adams and L.~I. Hedberg, \emph{Function {S}paces and {P}otential
  {T}heory}, Grundlehren math. Wiss., vol. 314, Springer-Verlag, Berlin, 1996. \MR{1411441}


\bibitem{AFP}
L. Ambrosio, N. Fusco and D. Pallara, \emph{Functions of bounded
  variation and free discontinuity problems}, The Clarendon Press, Oxford
  University Press, New York, 2000. \MR{1857292}


\bibitem{BesaluNualart2011}
M. Besal\'u and D. Nualart, \emph{Estimates for the solution to
  stochastic differential equations driven by a fractional {B}rownian motion
  with {H}urst parameter $h \in ( \frac13 , \frac12 )$}, Stoch. Dyn.
  \textbf{11} (2011), 243--263. \MR{2836524}


\bibitem{CG:14}
Kh. Chouk and M. Gubinelli, \emph{Rough sheets}, Preprint (2014),
  1--53, \url{https://arxiv.org/abs/1406.7748}.


\bibitem{CQ2002}
L. Coutin and Zh. Qian, \emph{Stochastic analysis, rough path analysis
  and fractional {B}rownian motions}, Probab. Theory Related Fields
  \textbf{122} (2002), no.~1, 108--140. \MR{1883719}


\bibitem{FH:20}
P.~K. Friz and M. Hairer, \emph{A course on rough paths}, Universitext,
  Springer, Cham, 2020, With an introduction to regularity structures. Second
  edition. \MR{4174393}


\bibitem{FrizVictoir}
P.~K. Friz and N.~B. Victoir, \emph{Multidimensional stochastic
  processes as rough paths}, Cambridge Studies in Advanced Mathematics, vol.
  120, Cambridge University Press, Cambridge, 2010, Theory and applications. \MR{2604669}


\bibitem{Gubinelli}
M. Gubinelli, \emph{Controlling rough paths}, J. Funct. Anal.
  \textbf{216} (2004), no.~1, 86--140. \MR{2091358}


\bibitem{HTV2022}
M. Hinz, J.~M. T\"{o}lle and L. Viitasaari, \emph{{S}obolev
  regularity of occupation measures and paths, variability and compositions},
  Electron. J. Probab. \textbf{27} (2022), no.~73, 1--29. \MR{4440066}


\bibitem{HTV2020}
M. Hinz, J.~M. T{\"o}lle and L. Viitasaari, \emph{Variability of
  paths and differential equations with {$BV$}-coefficients}, Ann. Inst. Henri
  Poincar\'e (B) Probab. Stat. \textbf{59} (2023), no.~4, 2036--2082. \MR{4663516}


\bibitem{HN09}
Y. Hu and D. Nualart, \emph{Rough path analysis via fractional
  calculus}, Trans. Amer. Math. Soc. \textbf{361} (2009), no.~5, 2689--2718. \MR{2471936}


\bibitem{Huang2020}
J. Huang, D. Nualart, L. Viitasaari and G. Zheng,
  \emph{Gaussian fluctuations for the stochastic heat equation with colored
  noise}, Stoch. Partial Differ. Equ. Anal. Comput. \textbf{8} (2020), no.~2,
  402--421. \MR{4098872}


\bibitem{Ito2015}
Y.~Ito, \emph{Integrals along rough paths via fractional calculus}, Pot. Anal.
  \textbf{42} (2015), 155--174. \MR{3297991}


\bibitem{Ito2017b}
\bysame, \emph{Extension theorem for rough paths via fractional calculus}, J.
  Math. Soc. Japan \textbf{69} (2017), no.~3, 893--912. \MR{3685030}


\bibitem{Ito2017}
\bysame, \emph{Integration of controlled rough paths via fractional calculus},
  Forum Math. \textbf{29} (2017), no.~5, 1163--1175. \MR{3692031}


\bibitem{Landkof}
N.~S. Landkof, \emph{Foundations of {M}odern {P}otential {T}heory},
  Grundlehren math. Wiss., vol. 180, Springer-Verlag, New York-Heidelberg,
  1972. \MR{350027}


\bibitem{LPT2021}
Ch. Liu, D.~J. Pr\"{o}mel and J. Teichmann, \emph{On {S}obolev rough
  paths}, J. Math. Anal. Appl. \textbf{497} (2021), no.~1, Paper No. 124876,
  21. \MR{4192218}


\bibitem{LyonsQian}
T.~J. Lyons and Zh. Qian, \emph{System control and rough paths}, Oxford
  Mathematical Monographs, Oxford University Press, Oxford, 2002. \MR{2036784}


\bibitem{Lyons1994}
T.~J. Lyons, \emph{Differential equations driven by rough signals. {I}. {A}n
  extension of an inequality of {L}.{C}. {Y}oung}, Math. Res. Lett. \textbf{1}
  (1994), no.~4, 451--464. \MR{1302388}


\bibitem{Lyons98}
\bysame, \emph{Differential equations driven by rough signals}, Rev. Mat.
  Iberoamericana \textbf{14} (1998), no.~2, 215--310. \MR{1654527}


\bibitem{LCL:07}
T.~J. Lyons, M. Caruana, and Th. L\'{e}vy, \emph{Differential
  equations driven by rough paths}, Lecture Notes in Mathematics, vol. 1908,
  Springer, Berlin, 2007. \MR{2314753}

\bibitem{MM23}
T. Matsuda and A. Mayorcas, \emph{Pathwise uniqueness for multiplicative Young and rough
differential equations driven by fractional Brownian motion}, Preprint (2023), \url{https://arxiv.org/abs/2312.06473}.

\bibitem{MP24}
T. Matsuda and N. Perkowski, \emph{An extension of the stochastic sewing lemma and applications
to fractional stochastic calculus}, Forum of Math. Sigma \textbf{12}, (2024), no.~e52, 1--52. \MR{4730255}

\bibitem{NualartRascanu}
D. Nualart and A. R\u{a}\c{s}canu, \emph{Differential equations driven by
  fractional {B}rownian motion}, Collect. Math. \textbf{53} (2002), no.~1,
  55--81. \MR{1893308}


\bibitem{NT:11}
D. Nualart and S. Tindel, \emph{A construction of the rough path above
  fractional {B}rownian motion using {V}olterra's representation}, Ann. Probab.
  \textbf{39} (2011), no.~3, 1061--1096. \MR{2789583}


\bibitem{SKM}
S.~G. Samko, Anatoly~A. Kilbas, and Oleg~I. Marichev, \emph{Fractional
  integrals and derivatives}, Gordon and Breach Science Publishers, Yverdon,
  1993. \MR{1347689}


\bibitem{Young}
L.~C. Young, \emph{An inequality of the {H}\"{o}lder type, connected with
  {S}tieltjes integration}, Acta Math. \textbf{67} (1936), no.~1, 251--282. \MR{1555421}


\bibitem{Zahle98}
M. Z\"{a}hle, \emph{Integration with respect to fractal functions and
  stochastic calculus. {I}}, Probab. Theory Related Fields \textbf{111} (1998),
  no.~3, 333--374. \MR{1640795}


\bibitem{Zahle01}
\bysame, \emph{Integration with respect to fractal functions and stochastic
  calculus. {II}}, Math. Nachr. \textbf{225} (2001), 145--183. \MR{1827093}


\bibitem{Ziemer}
W.~P. Ziemer, \emph{Weakly differentiable functions}, Graduate Texts in
  Mathematics, vol. 120, Springer-Verlag, New York, 1989, Sobolev spaces and
  functions of bounded variation. \MR{1014685}


\end{thebibliography}
\end{document}